\documentclass[11pt,a4paper,reqno]{amsart}
\usepackage{amssymb,amsfonts,amsthm,amscd,stmaryrd}
\usepackage{mathrsfs}

\usepackage[font=small,labelfont=bf]{caption}
\usepackage{times,graphicx,epsfig}
\usepackage{colortbl}
\usepackage{esint}
\newcommand{\beq}{\begin{equation}}
	\newcommand{\eeq}{\end{equation}}

\newcommand{\medint}{-\kern -,375cm\int}
\newcommand{\medintinrigo}{-\kern -,315cm\int}

\numberwithin{equation}{section}
\usepackage{mathtools}
\usepackage{esint}

\newcommand{\pedfrac}[2]{\genfrac{}{}{0pt}{}{#1}{#2}}

\theoremstyle{plain}

\newtheorem{teo}{Theorem}[section]
\newtheorem{cor}[teo]{Corollary}
\newtheorem{prop}[teo]{Proposition}
\newtheorem{lemma}[teo]{Lemma}
\theoremstyle{remark}
\newtheorem{oss}[teo]{Remark}
\theoremstyle{definition}
\newtheorem{definition}[teo]{Definition}

\title[A spherical flatness index and a stability inequality]{A spherical flatness index\\ and a stability inequality for  harmonic pseudospheres}

\author[A. Buffoni, G. Cupini, E. Lanconelli]{Andrea Buffoni, Giovanni Cupini,  Ermanno Lanconelli}

\address{Andrea Buffoni, Giovanni Cupini, Ermanno Lanconelli: Dipartimento di Matematica, Universit\`a di
	Bologna\\ Piazza di Porta San Donato 5,
	40126 - Bologna, Italy}
\email{andrea.buffoni4@unibo.it}
\email{giovanni.cupini@unibo.it}
\email{ermanno.lanconelli@unibo.it}

\keywords{Surface Gauss mean value formula, stability, harmonic functions, pseudospheres, rigidity}
\subjclass[2020]{Primary: 35B05; Secondary: 31B05, 35B06}
\begin{document} 
\maketitle
% \begin{center}
	%  {\sc Giovanni Cupini - Nicola Fusco - Ermanno Lanconelli - Xiao Zhong} \end{center}
\begin{abstract}
We introduce a new flatness index for the boundary of an open subset  $\Omega$ of $\mathbb{R}^n$, $n\ge 2$. This index provides  a necessary condition for $\partial\Omega$ to be a harmonic pseudosphere and  sufficient conditions for a harmonic pseudosphere to be a Euclidean sphere. These conditions will  follow from a stability inequality formulated in terms of a harmonic invariant, the Kuran gap, recently introduced   by the last two authors.
	\end{abstract} 
\section{Introduction and main results}

\subsection{Harmonic pseudospheres}
Let $\Omega\subseteq\mathbb{R}^n$ be a bounded open set whose boundary has $(n-1)$-Hausdorff measure $|\partial\Omega|<+\infty$. If $x_0\in\Omega$, then $\partial\Omega$ is called a harmonic pseudosphere centered at $x_0$ if $$u(x_0)=\fint_{\partial\Omega} u \, d\sigma$$ for every function $u$ harmonic in $\Omega$ and continuous up to $\partial\Omega$. \\
The classical Gauss mean value theorem for harmonic functions, implies that every Euclidean sphere is a harmonic pseudosphere; the vice versa, however, is not true in general. Indeed, in $\mathbb{R}^n$, $n\geq2$, there exist harmonic pseudospheres, homeomorphic to the unit Euclidean sphere $\partial B(0,1)$ via H\"older-continuous homeomorphisms, which are not Euclidean spheres; see the 1991 paper \cite{LewVog91} by Lewis and Vogel. For earlier results in dimension $n=2$, see the papers \cite{KelLav} and \cite{Sha}.

In the present paper we introduce a new flatness index for $\partial\Omega$ at certain points, which provides a necessary condition for $\partial\Omega$ to be a harmonic pseudosphere and sufficient conditions for a harmonic pseudosphere to be a Euclidean sphere. These conditions follow from a stability inequality formulated in terms of a harmonic invariant, the Kuran gap, introduced in \cite{CupLan25}.

\subsection{The Kuran gap}
Let $\alpha\in\mathbb{R}^n$. We call $\alpha$-Kuran function the map 
$$x\mapsto k_{\alpha}(x):=1+h_{\alpha}(x)$$ 
where $$h_{\alpha}(x):=|\alpha|^{n-2}\frac{|x|^2-|\alpha|^2}{|x-\alpha|^n}, \quad x\in\mathbb{R}^n\backslash\{\alpha\}.$$
This function was introduced by  Kuran in 1972 \cite{Kur}.

The $\alpha$-Kuran function is harmonic in $\mathbb{R}^n\backslash\{\alpha\}$ and vanishes at $x=0$. \\
In \cite{CupLan25}, the Kuran gap of $\partial\Omega$ with respect to $x_0$, denoted by $\mathcal{K}(\partial\Omega,x_0)$ was introduced and defined as follows. If $x_0=0$, then 
\begin{align*}
\mathcal{K}(\partial\Omega,0):&=\sup_{\alpha\notin\overline{\Omega}}\left|k_{\alpha}(0)-\fint_{\partial\Omega}k_{\alpha}(x) \, d\sigma(x)\right| \nonumber \\
&=\sup_{\alpha\notin\overline{\Omega}}\left|\fint_{\partial\Omega}k_{\alpha}(x) \, d\sigma(x)\right|.
\end{align*}
If $x_0\neq0$, then $$\mathcal{K}(\partial\Omega,x_0):=\mathcal{K}(-x_0+\partial\Omega,0).$$
It is easy to recognize that the Kuran gap is invariant with respect to Euclidean translations and dilations. 

Moreover, and crucially for our purposes, $\mathcal{K}(\partial\Omega,x_0)=0$ whenever $\partial\Omega$ is a harmonic pseudosphere centered at $x_0$. \\ 
By the translation invariance of the Kuran gap, it suffices to prove this statement in the case $x_0=0$. In this case, indeed, since $k_{\alpha}$ is harmonic in $\mathbb{R}^n\backslash\{\alpha\}$, if $\partial\Omega$ is a harmonic pseudosphere centered at $x_0$, we have $$k_{\alpha}(x_0)=\fint_{\partial\Omega}k_{\alpha} \, d\sigma \quad \forall \alpha\notin\overline{\Omega}.$$
Hence, by the very definition of Kuran gap, $\mathcal{K}(\partial\Omega,0)=0$.

\subsection{The spherical flatness index}
Let $\Omega\subseteq\mathbb{R}^n$ be open and let $x_0\in\Omega$. Assume that every compact subset of $\partial\Omega$ has finite $(n-1)$-Hausdorff measure. We denote by $T(\partial\Omega,x_0)$ the touching set of $\partial\Omega$ with respect to $x_0$, defined as follows $$T(\partial\Omega,x_0):=\{z\in\partial\Omega \ : \ |x_0-z|=\mathrm{dist}(\partial\Omega,x_0)\}.$$
Equivalently, letting $r:=\mathrm{dist}(\partial\Omega,x_0)$, $$T(\partial\Omega,x_0)=\partial\Omega\cap\partial B(x_0,r).$$
If $z\in T(\partial\Omega,x_0)$ we let $r:=|z-x_0|$ and we define 
\begin{equation}\label{e: definizione spherical index}
S_z(\partial\Omega,x_0):=\frac{2}{n\omega_n r}\liminf_{R\searrow 0}\left(\liminf_{\pedfrac{\alpha\to z}{\alpha\notin\overline{\Omega}}}\int_{\partial \Omega\cap B(z,R)} \frac{\langle\alpha-x,\alpha-x_0\rangle}{|x-\alpha|^n} \, d\sigma(x)\right).
\end{equation}
We call $S_z(\partial\Omega,x_0)$ the \textit{spherical flatness index} of $\partial\Omega$ at $z$.

Before proceeding, we find it convenient to comment on this definition through the following remarks.

\begin{oss}\label{r: poisson kernel}
The map $$(x,\alpha)\mapsto \frac{2}{n\omega_n r}\frac{\langle \alpha-x, \alpha-x_0\rangle}{|x-x_0|^n}$$ is essentially the Poisson kernel of the halfspace $$\Pi_z:=z+\{x\in\mathbb{R}^n \ : \ \langle x-x_0, z-x_0 \rangle \geq0\}.$$
\end{oss}

\begin{oss}\label{r: invarianza indice sferico}
The spherical flatness index is invariant under rigid motions.
\end{oss}

\begin{oss}\label{r: indice uguale ad 1}
The spherical flatness index $S_z(\partial\Omega,x_0)$ equals $1$ for all domains whose boundary is flat in a neighborhood of $z$. Moreover, $S_z(\partial\Omega,x_0)=1$ when $\Omega$ is any Euclidean ball containing $x_0$, not necessarily centered at $x_0$.
\end{oss}

The statements in Remark \ref{r: poisson kernel} and \ref{r: invarianza indice sferico} can be easily proved. Those in Remark \ref{r: indice uguale ad 1}, will be explicitly proved in Section \ref{s: computations}, in Proposition \ref{p: indice sferico nel caso piatto} and Proposition \ref{p: indice sferico nel caso sfera non centrata}.

In Section \ref{s: computations}, we will also prove the following proposition, which relies on the definition below.
First of all, we fix the following notations.

\medbreak
We split $\mathbb{R}^n$ as $\mathbb{R}^{n-1}\times\mathbb{R}$ and if $z=(z_1,\ldots,z_n)\in\mathbb{R}^n$, we denote it as $z=(z',z_n)$ with $z'=(z_1,\ldots,z_{n-1})\in\mathbb{R}^{n-1}$. \\
Given $R>0$ and $-\infty<a<z_n<b<+\infty$, we consider a rectangular neighborhood of the point $z$ of this kind $$U_R(a,b):=\widehat{B}_R(z')\times(a,b)$$ where $\widehat{B}_R(z'):=\left\{y\in\mathbb{R}^{n-1} \ : \ \left|y-z'\right|<R\right\}$. 
\begin{definition}[Lipschitz flatness]
	Let $\Omega\subseteq\mathbb{R}^n$ be open and let $z\in\partial\Omega$. We say that $\partial\Omega$ is \textit{Lipschitz flat} at the point $z$ if there exists a function $$\psi:\widehat{B}_R(z')\rightarrow(a,b)$$ with $R>0$, $-\infty<a<z_n<b<+\infty$ such that:
	\begin{itemize}
		\item[(i)] $\psi$ is Lipschitz, differentiable and $\nabla\psi(z'+t\xi)\to\nabla\psi(z')$ as $t\to0$ 
		for almost every $\xi\in\mathbb{R}^{n-1}$;
		\item[(ii)] $\partial \Omega \cap U_R(a,b)=\left\{(y,\psi(y)) \ : \ y\in\widehat{B}_R(z')\right\}$;
		\item[(iii)] $ \Omega \cap U_R(a,b)=\left\{(y,t) \ : \ y\in\widehat{B}_R(z'), \ a<t<\psi(y)<b\right\}$.
	\end{itemize}
\end{definition}
Then, the following result holds.
\begin{prop}\label{p: indice sferico nel caso lipschitz piatto}
 Let $\Omega\subseteq \mathbb{R}^n$ be a bounded open set and let $x_0\in \Omega$.  If $\partial\Omega$ is Lipschitz flat at a point $z\in T(\partial\Omega,x_0)$, then $$S_z(\partial\Omega,x_0)\leq1.$$
\end{prop}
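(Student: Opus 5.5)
We argue directly from the definition \eqref{e: definizione spherical index}, testing the inner $\liminf$ along one admissible sequence $\alpha\to z$, namely the points on the inner normal line approached from outside. After rescaling, the integrand converges to the Poisson kernel of a half-space, whose integral is exactly $\tfrac{n\omega_n}{2}$. By the invariance under rigid motions (Remark \ref{r: invarianza indice sferico}) we may assume $z=0$, and we keep the graph representation (ii)--(iii) on $U_R(a,b)$, with $\psi(0)=0$.

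\emph{Geometric normalization.} Since $z$ is a touching point, the ball $B(x_0,r)$ is contained in $\Omega$ and $0\in\partial B(x_0,r)\cap\partial\Omega$. Because $\Omega\cap U_R(a,b)$ lies below the graph of $\psi$ and $\psi$ is differentiable at $0$, the sphere must be tangent to the graph at $0$. Hence $\nabla\psi(0)=0$ and $x_0=-re_n$. Moreover, the ball lies below the graph, so for $|y|<r$ we get
\[
\psi(y)\ge \sqrt{r^2-|y|^2}-r\ge -|y|^2/r .
\]

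\emph{Choice of $\alpha$ and rescaling.} Take $\alpha=se_n$ with $s\searrow0$; these points lie above the graph, so $\alpha\notin\overline\Omega$. Then the inner $\liminf$ is at most $\limsup_{s\to0}I_R(s)$, where, with $x=(y,\psi(y))$ and $d\sigma=\sqrt{1+|\nabla\psi|^2}\,dy$,
\[
I_R(s)=(s+r)\int_{\widehat B_R(0)}\frac{(s-\psi(y))\sqrt{1+|\nabla\psi(y)|^2}}{\left(|y|^2+(s-\psi(y))^2\right)^{n/2}}\,dy .
\]
(We may shrink $R$ so that $\widehat B_R(0)\times(a,b)$ is the graph chart and $B(0,R)\cap\partial\Omega$ is contained in it; replacing $B(0,R)$ by the cylinder only changes the domain by a comparable set, and we handle it the same way.)

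Substitute $y=s\xi$ and set $m_s(\xi)=1-\psi(s\xi)/s$. This gives
\[
I_R(s)=(s+r)\int_{|\xi|<R/s}\frac{m_s(\xi)\sqrt{1+|\nabla\psi(s\xi)|^2}}{(|\xi|^2+m_s^2)^{n/2}}\,d\xi .
\]
Pointwise, $m_s(\xi)\to1-\langle\nabla\psi(0),\xi\rangle=1$, and by hypothesis (i) $\nabla\psi(s\xi)\to0$ for a.e.\ $\xi$. So the integrand tends to $(1+|\xi|^2)^{-n/2}$, and the limit integral equals
\[
r\int_{\mathbb{R}^{n-1}}(1+|\xi|^2)^{-n/2}\,d\xi=\frac{n\omega_n r}{2},
\]
which is the normalization of the half-space Poisson kernel in Remark \ref{r: poisson kernel}.

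\emph{Upper bound.} Only an upper bound is needed, so we may drop the region where $m_s<0$. Let $L$ be the Lipschitz constant of $\psi$, and split the set where $m_s>0$ into two pieces.
\begin{itemize}
\item On $\{0<m_s\le 2\}$ the integrand is dominated by $2\sqrt{1+L^2}\,\min(1,|\xi|^{-n})$, which is integrable on $\mathbb{R}^{n-1}$. The reverse Fatou lemma then gives $\limsup\le \tfrac{n\omega_n r}{2}$ for this part.
\item On $\{m_s>2\}$ we have $\psi(s\xi)<-s$. The sphere bound gives $m_s\le 1+s|\xi|^2/r$, hence $m_s\le 2s|\xi|^2/r$. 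Using the denominator bound $(|\xi|^2+m_s^2)^{n/2}\ge|\xi|^n$, this part is at most
\[
C\,s\int_{|\xi|<R/s}|\xi|^{2-n}\,d\xi\le C' R .
\]
\end{itemize}
Therefore $\limsup_{s\to0}I_R(s)\le \tfrac{n\omega_n r}{2}+C'R$. Letting $R\searrow0$ and multiplying by $\tfrac{2}{n\omega_n r}$ gives $S_z(\partial\Omega,x_0)\le1$.

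\emph{Main obstacle.} The delicate step is the domination. The region where the graph dips far below the tangent plane, at scale $|y|\gg s$, produces an error that is only $O(R)$ and does not vanish as $s\to0$. This is why the outer $\liminf_{R\searrow0}$ is needed, and why the interior-ball lower bound $\psi\ge -|y|^2/r$ coming from the touching condition is essential. Without it the tail $\int|\xi|^{1-n}\,d\xi$ diverges logarithmically.
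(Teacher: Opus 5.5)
Your proposal is correct and follows essentially the paper's proof. You use the same test sequence $\alpha\to z$ along the normal from outside, and the same rescaling (the paper's $\rho=(t-r)s$ in polar coordinates is your $y=s\xi$), under which the integrand tends to the half-space Poisson kernel of mass $\tfrac{n\omega_n r}{2}$. You also use the same interior-ball bound $\psi(y)\ge-|y|^2/r$ to show that the dip of the graph below the tangent plane costs only $O(R)$. The paper splits the numerator as $1-s\omega$ and bounds $(I_R^{(2)})^-\le cR$, whereas you split according to the size of $m_s$ (the paper's $1-s\omega$); this is the same estimate.

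Two small repairs are needed.
\begin{enumerate}
\item On $\{0<m_s\le 2\}$ the dominating function must be $C(n,L,r)\min(1,|\xi|^{-n})$ rather than $2\sqrt{1+L^2}\min(1,|\xi|^{-n})$. Near $\xi=0$ this bound relies on $m_s\ge 1-L|\xi|$, which is the paper's $|s\omega|\le 1/a$.
\item Tangency alone only says that $x_0-z$ is normal to the graph. So $\nabla\psi(0)=0$ and $x_0=-re_n$ must come from a rotation, as in the paper. Alternatively, you can keep $p=\nabla\psi(0)\neq0$: the limit is unchanged, because $\sqrt{1+|p|^2}\,\det(I+pp^{T})^{-1/2}=1$.
\end{enumerate}
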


\subsection{Main results}
The central result of this paper is the following theorem, from which the other main results follow.

\begin{teo}\label{t: relazione KG e S_z}
Let $\Omega\subseteq\mathbb{R}^n$ be a bounded open set such that $|\partial\Omega|<+\infty$. Consider a point $x_0\in\Omega$ and let $z\in T(\partial \Omega,x_0)$. Then, the following stability inequality holds:
\begin{equation}\label{e: relazione KG e S_z}
\mathcal{K}(\partial\Omega,x_0)\geq\frac{|\partial \Omega|-S_z(\partial \Omega,x_0)|\partial B|}{|\partial \Omega|},
\end{equation}
where $B$ is the biggest ball centered at $x_0$ and contained in $\Omega$, i.e. $$B:=B(x_0,r) \quad \mathrm{with} \quad r:=|z-x_0|=\mathrm{dist}(x_0,\partial\Omega).$$
\end{teo}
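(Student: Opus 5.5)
By translation invariance of both $\mathcal{K}$ and $S_z$, I will take $x_0=0$, so $r=|z|=\mathrm{dist}(0,\partial\Omega)$ and $|\partial B|=n\omega_n r^{n-1}$. The plan is to bound the Kuran gap from below by $-\fint_{\partial\Omega}k_\alpha\,d\sigma$, with $\alpha\notin\overline{\Omega}$ tending to $z$. This uses only the definition of the gap, and then the quantity $|\partial\Omega|(1+\mathcal{K})$ has to be compared with the local integral defining $S_z$. First compute
\[
-\int_{\partial\Omega}k_\alpha\,d\sigma=-|\partial\Omega|-|\alpha|^{n-2}\int_{\partial\Omega}\frac{|x|^2-|\alpha|^2}{|x-\alpha|^n}\,d\sigma(x),
\]
so that
\[
\mathcal{K}(\partial\Omega,0)\,|\partial\Omega|\ \ge\ -|\partial\Omega|+|\alpha|^{n-2}\int_{\partial\Omega}\frac{|\alpha|^2-|x|^2}{|x-\alpha|^n}\,d\sigma(x).
\]
This is the wrong sign for what I need. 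I will therefore use the other branch of the absolute value,
\[
\mathcal{K}|\partial\Omega|\ \ge\ \int_{\partial\Omega}k_\alpha\,d\sigma=|\partial\Omega|-|\alpha|^{n-2}\int_{\partial\Omega}\frac{|\alpha|^2-|x|^2}{|x-\alpha|^n}\,d\sigma,
\]
and the goal becomes
\[
\liminf_{\alpha\to z}|\alpha|^{n-2}\int_{\partial\Omega}\frac{|\alpha|^2-|x|^2}{|x-\alpha|^n}\,d\sigma\ \le\ S_z\,n\omega_n r^{n-1}.
\]
Here the right orientation is a $\limsup$ over $\alpha$, but since the definition of $S_z$ uses $\liminf$, I only need some sequence $\alpha_j\to z$; taking a liminf is fine because the inequality for $\mathcal{K}$ holds for every $\alpha$.

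Next, the key pointwise fact. For $x\in\partial\Omega$ we have $|x|\ge r$, and for $\alpha$ near $z$ we have $|\alpha|\approx r$. The identity $|\alpha|^2-|x|^2=2\langle\alpha-x,\alpha\rangle-|\alpha-x|^2$ gives
\[
\frac{|\alpha|^2-|x|^2}{|x-\alpha|^n}=\frac{2\langle \alpha-x,\alpha\rangle}{|x-\alpha|^n}-\frac{1}{|x-\alpha|^{n-2}}\le\frac{2\langle \alpha-x,\alpha\rangle}{|x-\alpha|^n}.
\]
So I will split $\partial\Omega$ into $\partial\Omega\cap B(z,R)$ and its complement.

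On the complement, $|x-\alpha|\ge R/2$ once $\alpha$ is close to $z$. By dominated convergence, using $|\partial\Omega|<\infty$, the integral converges as $\alpha\to z$ to $r^{n-2}\int_{\partial\Omega\setminus B(z,R)}\frac{r^2-|x|^2}{|x-z|^n}\,d\sigma\le 0$, because $|x|\ge r$. Hence this part contributes at most $o(1)$.

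On $B(z,R)$, I bound the integrand by $2\langle\alpha-x,\alpha\rangle/|x-\alpha|^n$ and multiply by $|\alpha|^{n-2}\to r^{n-2}$. This yields
\[
\liminf_{\alpha}\ \le\ 2r^{n-2}\liminf_{\alpha\to z}\int_{\partial\Omega\cap B(z,R)}\frac{\langle\alpha-x,\alpha\rangle}{|x-\alpha|^n}\,d\sigma+o_R(1).
\]
The liminf over $\alpha$ is taken along a sequence that realizes the local liminf, and the complement term tends to something $\le0$ along any sequence. Taking $\liminf_{R\to0}$ gives $2r^{n-2}\cdot\frac{n\omega_n r}{2}S_z=n\omega_nr^{n-1}S_z=S_z|\partial B|$. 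Rearranging produces \eqref{e: relazione KG e S_z}.

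The main obstacle is the bookkeeping of limits. The factor $|\alpha|^{n-2}$ must be handled when the local integral may be negative or infinite; if $S_z=+\infty$ the inequality is trivial. The liminf in $\alpha$ must also be interchanged with the splitting at a fixed $R$. I would handle this by fixing $R$, choosing a sequence $\alpha_j$ attaining the inner liminf, and then letting $R\to0$ along a minimizing sequence. The sign manipulations above should be checked carefully against the definition of $h_\alpha$.
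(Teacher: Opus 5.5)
Your proposal is correct and follows essentially the same route as the paper. You bound $\mathcal{K}(\partial\Omega,0)$ from below by $\fint_{\partial\Omega}k_\alpha\,d\sigma$ and split $\partial\Omega$ at $B(z,R)$. On the far part, dominated convergence together with $|x|\ge r$ gives a limit of the favorable sign. On the near part, the identity $|\alpha|^2-|x|^2=2\langle\alpha-x,\alpha\rangle-|x-\alpha|^2$ lets you discard the $|x-\alpha|^{2-n}$ term. Your explicit handling of the limits (the multiplier $|\alpha|^{n-2}\to r^{n-2}>0$, then $\liminf_{R\searrow0}$) is slightly more careful than the paper's, which writes the same steps in terms of $h_\alpha$ with the opposite sign convention.
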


From this theorem we easily obtain the results of the following corollaries, in which $\Omega$ always denotes a bounded open subset of $\mathbb{R}^n$ with $|\partial\Omega|<+\infty$ and containing the point $x_0$.

\begin{cor}\label{c: condizione necessaria pseudosfere}
If $\partial \Omega$ is a harmonic pseudosphere centered at $x_0$, then 
\begin{equation}\label{e: CN pseudosfere}
S_z(\partial \Omega,x_0)\geq 1 \quad \forall z\in T(\partial \Omega,x_0).
\end{equation}
\end{cor}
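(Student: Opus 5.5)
The corollary should follow directly from Theorem \ref{t: relazione KG e S_z} combined with the vanishing of the Kuran gap on harmonic pseudospheres, which was observed in the introduction. Let $\partial\Omega$ be a harmonic pseudosphere centered at $x_0$ and fix $z\in T(\partial\Omega,x_0)$. First we use the fact, proved in the discussion of the Kuran gap, that $\mathcal{K}(\partial\Omega,x_0)=0$. The argument is to translate $x_0$ to the origin and then apply the mean value property to each $k_\alpha$, $\alpha\notin\overline{\Omega}$. This is allowed because $k_\alpha$ is harmonic in a neighborhood of $\overline{\Omega}$, and $k_\alpha(0)=0$.

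Next we substitute this into the stability inequality \eqref{e: relazione KG e S_z}. This gives
\begin{equation*}
0\geq\frac{|\partial\Omega|-S_z(\partial\Omega,x_0)|\partial B|}{|\partial\Omega|},
\end{equation*}
with $B=B(x_0,r)$ and $r=\mathrm{dist}(x_0,\partial\Omega)$. Since $0<|\partial\Omega|<+\infty$, we can multiply through and obtain $S_z(\partial\Omega,x_0)|\partial B|\geq|\partial\Omega|$.

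The remaining ingredient is the comparison $|\partial\Omega|\geq|\partial B|$, after which dividing by $|\partial B|>0$ yields $S_z(\partial\Omega,x_0)\geq1$. The plan is to get this comparison from the radial projection $\pi(x)=x_0+r\,\frac{x-x_0}{|x-x_0|}$ onto $\partial B$. Since $\Omega\supseteq B$ is bounded, every ray from $x_0$ leaves $\Omega$ and therefore meets $\partial\Omega$ at some point with $|x-x_0|\geq r$. Hence $\pi(\partial\Omega)=\partial B$. On the region $\{|x-x_0|\geq r\}$ the map $\pi$ is $1$-Lipschitz, because it is the nearest-point projection onto the convex set $\overline{B}$ followed by nothing further there. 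Lipschitz maps do not increase $\mathcal{H}^{n-1}$ measure, so $|\partial B|=\mathcal{H}^{n-1}(\pi(\partial\Omega))\leq|\partial\Omega|$.

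The only step needing care is this surjectivity-plus-contraction argument. Even it is routine, since everything else is a direct citation of Theorem \ref{t: relazione KG e S_z}.
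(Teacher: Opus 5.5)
Your proof is correct, and its skeleton matches the paper's. You insert $\mathcal{K}(\partial\Omega,x_0)=0$ into \eqref{e: relazione KG e S_z} to get $S_z(\partial\Omega,x_0)\ge|\partial\Omega|/|\partial B|$, and then conclude with $|\partial\Omega|\ge|\partial B|$. The only difference is how you justify that last comparison.

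The paper simply invokes the isoperimetric inequality, i.e. $|\partial\Omega|\ge n\omega_n^{1/n}|\Omega|^{\frac{n-1}{n}}\ge n\omega_n^{1/n}|B|^{\frac{n-1}{n}}=|\partial B|$, using $B\subseteq\Omega$. For an arbitrary bounded open set, where $|\partial\Omega|$ means $\mathcal{H}^{n-1}$ of the topological boundary, this tacitly relies on finite-perimeter theory: De Giorgi's isoperimetric inequality together with Federer's bound $P(\Omega)\le\mathcal{H}^{n-1}(\partial\Omega)$.

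Your radial-projection argument avoids all of that. It works directly with Hausdorff measure and uses only three facts: $B\subseteq\Omega$, $\Omega$ is bounded (so the first exit point along each ray lies in $\partial\Omega$ at distance at least $r$), and $1$-Lipschitz maps do not increase $\mathcal{H}^{n-1}$.

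What the paper's route buys is the stronger, volume-sensitive bound. The paper reuses it in the proof of Corollary \ref{c: condizione sufficiente pseudosfere} to control $|\Omega\setminus B|$ and force $\Omega=B$. Your projection yields only $|\partial\Omega|\ge|\partial B|$, which is exactly what is needed here.
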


\begin{cor}\label{c: indice S e disuguaglianza}
If there exists a point $z\in T(\partial \Omega,x_0)$ such that $$S_z(\partial \Omega,x_0)\leq1$$ then 
\begin{equation}\label{e: maggiorazione Kuran Gap}
\mathcal{K}(\partial \Omega,x_0)\geq\frac{|\partial \Omega|-|\partial B|}{|\partial \Omega|},
\end{equation}
where $B$ denotes the Euclidean ball $$B:=B(x_0,r), \quad r:=\mathrm{dist}(x_0,\partial \Omega).$$ 
\end{cor}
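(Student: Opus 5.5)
This corollary follows directly from Theorem \ref{t: relazione KG e S_z}, which applies because $\Omega$ is a bounded open set with $|\partial\Omega|<+\infty$ and $x_0\in\Omega$. Take the point $z\in T(\partial\Omega,x_0)$ given by the hypothesis, so that $S_z(\partial\Omega,x_0)\le 1$. Let $r=|z-x_0|=\mathrm{dist}(x_0,\partial\Omega)$ and $B=B(x_0,r)$; this is exactly the ball appearing in the theorem. Then \eqref{e: relazione KG e S_z} gives
\[
\mathcal{K}(\partial\Omega,x_0)\ \ge\ \frac{|\partial\Omega|-S_z(\partial\Omega,x_0)\,|\partial B|}{|\partial\Omega|}.
\]

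It remains to replace $S_z(\partial\Omega,x_0)$ by $1$ on the right-hand side. Since $|\partial B|>0$ and $S_z(\partial\Omega,x_0)\le 1$, we have $S_z(\partial\Omega,x_0)|\partial B|\le |\partial B|$. Hence the numerator satisfies $|\partial\Omega|-S_z(\partial\Omega,x_0)|\partial B|\ge |\partial\Omega|-|\partial B|$. This remains valid if $S_z(\partial\Omega,x_0)$ is negative or $-\infty$: in the latter case the right-hand side of \eqref{e: relazione KG e S_z} is $+\infty$, so $\mathcal K=+\infty$ and the claim is trivial.

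We also need $0<|\partial\Omega|<+\infty$ in order to divide. Finiteness is an assumption. Positivity holds because $\partial\Omega$ separates the point $x_0\in\Omega$ from infinity, since $\Omega$ is bounded. Projecting $\partial\Omega$ radially from $x_0$ onto a sphere centered at $x_0$ is surjective, and on the relevant region this projection is Lipschitz, so $\partial\Omega$ has positive $(n-1)$-measure. Dividing by $|\partial\Omega|$ then yields \eqref{e: maggiorazione Kuran Gap}.

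The only step needing care is the handling of extended values of $S_z$ together with this positivity of $|\partial\Omega|$. Both are routine, so the corollary contains no real obstacle; all the work lies in Theorem \ref{t: relazione KG e S_z}.
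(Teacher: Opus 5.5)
Your proposal is correct and is essentially the paper's argument: the paper just says the corollary follows immediately from Theorem~\ref{t: relazione KG e S_z}, because $S_z(\partial\Omega,x_0)\le 1$ gives $|\partial\Omega|-S_z(\partial\Omega,x_0)|\partial B|\ge|\partial\Omega|-|\partial B|$. Your extra remarks are valid but go beyond what the paper does, which takes $0<|\partial\Omega|<+\infty$ for granted since the averages defining the Kuran gap already require it; these remarks are the case $S_z=-\infty$ and the radial-projection argument for $|\partial\Omega|>0$, which, projecting onto $\partial B(x_0,r)$, even yields $|\partial\Omega|\ge|\partial B|$.
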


\begin{cor}[Local sufficient condition for a harmonic pseudosphere to be a sphere]\label{c: condizione sufficiente pseudosfere}
Let $\partial \Omega$ be a harmonic pseudosphere centered at $x_0$. If there exists a point $z\in T(\partial \Omega,x_0)$ such that $$S_z(\partial \Omega,x_0)\leq1, $$ then $\partial\Omega$ is a Euclidean sphere centered at $x_0$. Precisely, $$\partial\Omega=\partial B (x_0,|x_0-z|).$$
\end{cor}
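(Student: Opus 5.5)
We may assume $x_0=0$, because the Kuran gap and the spherical flatness index are both invariant under translations. Put $r:=|z|=\mathrm{dist}(0,\partial\Omega)$ and $B:=B(0,r)\subseteq\Omega$. The plan is to get from Corollary \ref{c: indice S e disuguaglianza} the measure bound $|\partial\Omega|\le|\partial B|$, and then show by a radial projection argument that this bound forces $\partial\Omega=\partial B$.

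\emph{Step 1: $|\partial\Omega|\le|\partial B|$.} Since $\partial\Omega$ is a harmonic pseudosphere centered at $0$, we have $\mathcal{K}(\partial\Omega,0)=0$, as shown in the introduction. The hypothesis $S_z(\partial\Omega,0)\le 1$ lets us apply Corollary \ref{c: indice S e disuguaglianza}, which gives $0\ge (|\partial\Omega|-|\partial B|)/|\partial\Omega|$. Hence $|\partial\Omega|\le|\partial B|$.

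\emph{Step 2: radial projection and its contraction.} Let $P(x):=rx/|x|$. Every point of $\partial\Omega$ satisfies $|x|\ge r$. On $\{|x|\ge r\}$ the map $P$ is $1$-Lipschitz, and on $\{|x|\ge r+\varepsilon\}$ it is $\tfrac{r}{r+\varepsilon}$-Lipschitz.

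Since $\Omega$ is bounded and contains $B$, every ray from $0$ leaves $\Omega$, and its first exit point lies in $\partial\Omega$. Therefore $P(\partial\Omega)=\partial B$.

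Split $\partial\Omega=C\cup A$ with $C:=\partial\Omega\cap\partial B$ and $A:=\partial\Omega\cap\{|x|>r\}$, and write $A_\varepsilon:=A\cap\{|x|\ge r+\varepsilon\}$. Using the Lipschitz bounds, we get
\[
|\partial B|\le \mathcal{H}^{n-1}(C)+\mathcal{H}^{n-1}(P(A\setminus A_\varepsilon))+\Big(\tfrac{r}{r+\varepsilon}\Big)^{n-1}\mathcal{H}^{n-1}(A_\varepsilon)\le |\partial\Omega|-\Big(1-\big(\tfrac{r}{r+\varepsilon}\big)^{n-1}\Big)\mathcal{H}^{n-1}(A_\varepsilon).
\]
Together with Step 1 this forces $\mathcal{H}^{n-1}(A_\varepsilon)=0$ for every $\varepsilon>0$, and so $\mathcal{H}^{n-1}(A)=0$. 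Consequently $\mathcal{H}^{n-1}(C)=|\partial B|$.

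Because $C$ is closed in $\partial B$ and has full measure there, it is dense in $\partial B$, hence equal to $\partial B$. Thus $\partial B\subseteq\partial\Omega$, and in particular $\Omega\cap\partial B=\emptyset$.

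\emph{Step 3: excluding points outside $\overline B$.} This is the delicate point, since $A$ is only known to be null, not empty. Suppose $\Omega\setminus\overline B\neq\emptyset$. Pick $\varepsilon>0$ small enough that $V:=\Omega\cap\{|x|>r+\varepsilon\}$ is a nonempty open set.

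For each point $y\in V$, follow the ray through $y$ outward. Since $\Omega$ is bounded, it exits $\Omega$ at a point of $\partial\Omega$ with modulus $>r+\varepsilon$, that is, a point of $A_\varepsilon$. The same holds for all rays through a small ball around $y$ contained in $V$.

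Hence $P(A_\varepsilon)$ contains an open cap of $\partial B$, so $0<\mathcal{H}^{n-1}(P(A_\varepsilon))\le\mathcal{H}^{n-1}(A_\varepsilon)$. This contradicts Step 2.

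Therefore $\Omega\subseteq\overline B$. Since $\Omega\cap\partial B=\emptyset$ and $B\subseteq\Omega$, this gives $\Omega=B$, and so $\partial\Omega=\partial B(0,|z|)$.
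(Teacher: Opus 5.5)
Your proof is correct, but it departs from the paper's after Step 1. The paper also starts from Corollary~\ref{c: indice S e disuguaglianza} and $\mathcal{K}(\partial\Omega,x_0)=0$. It then applies the isoperimetric inequality $|\partial\Omega|\ge n\omega_n^{1/n}|\Omega|^{\frac{n-1}{n}}$ and the mean value theorem for $t\mapsto t^{\frac{n-1}{n}}$. This yields the quantitative bound $\mathcal{K}(\partial\Omega,x_0)\ge \frac{(n-1)\omega_n^{1/n}}{|\Omega|^{1/n}}\frac{|\Omega\setminus B|}{|\partial\Omega|}$, hence $|\Omega\setminus B|=0$. Since $\Omega$ is open, $\Omega=B$.

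You instead use the nearest-point projection onto $\overline B$, which is $1$-Lipschitz and a strict contraction on $\{|x|\ge r+\varepsilon\}$. This gives $|\partial\Omega|-|\partial B|\ge\bigl(1-(\tfrac{r}{r+\varepsilon})^{n-1}\bigr)\mathcal{H}^{n-1}(A_\varepsilon)$. Your Step 3 then shows that the null set $A$ cannot hide an open piece of $\Omega$ outside $\overline B$; this plays the role of the paper's remark that an open set of measure zero is empty.

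Your route is more elementary. Applying the isoperimetric inequality to $\mathcal{H}^{n-1}(\partial\Omega)$ for an arbitrary bounded open set with $|\partial\Omega|<+\infty$ tacitly uses the geometric-measure-theoretic fact that the perimeter of $\Omega$ does not exceed $\mathcal{H}^{n-1}(\partial\Omega)$. You need only the behaviour of Hausdorff measure under Lipschitz maps and the surjectivity $P(\partial\Omega)=\partial B$. The same projection argument also proves, without the isoperimetric inequality, the bound $|\partial\Omega|\ge|\partial B|$ used in Corollary~\ref{c: condizione necessaria pseudosfere}.

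In exchange, the paper's route gives a stability estimate that controls the volume $|\Omega\setminus B|$ by the Kuran gap. Yours controls only the boundary measure $\mathcal{H}^{n-1}(A_\varepsilon)$ away from $\partial B$.

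A minor point: the density argument at the end of Step 2 (that $C=\partial B$) is redundant. Once Step 3 gives $\Omega\subseteq\overline B$, openness of $\Omega$ already forces $\Omega\subseteq B$.
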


\begin{cor}\label{c: pseudosfere lipschitz piatte}
Let $\partial \Omega$ be a harmonic pseudosphere centered at the point $x_0$. If $\partial\Omega$ is Lipschitz flat at a point $z\in T(\partial\Omega,x_0)$, then $\partial \Omega$ is a Euclidean sphere centered at $x_0$. Precisely, $$\partial\Omega=\partial B(x_0,|x_0-z|).$$
\end{cor}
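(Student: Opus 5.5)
The plan is to obtain this corollary by chaining two results already stated: Proposition \ref{p: indice sferico nel caso lipschitz piatto}, which converts Lipschitz flatness into a bound on the spherical flatness index, and Corollary \ref{c: condizione sufficiente pseudosfere}, which converts that bound into rigidity. No new estimates should be needed.

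First, check the standing hypotheses. As in all the corollaries, $\Omega$ is a bounded open subset of $\mathbb{R}^n$ with $|\partial\Omega|<+\infty$ and $x_0\in\Omega$. Fix $z\in T(\partial\Omega,x_0)$ at which $\partial\Omega$ is Lipschitz flat. Proposition \ref{p: indice sferico nel caso lipschitz piatto} then applies directly and gives
$$S_z(\partial\Omega,x_0)\le 1 .$$
This is the only place where the Lipschitz flatness at $z$ is used.

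Second, $\partial\Omega$ is a harmonic pseudosphere centered at $x_0$, and we have found a point $z\in T(\partial\Omega,x_0)$ with $S_z(\partial\Omega,x_0)\le 1$. These are exactly the hypotheses of Corollary \ref{c: condizione sufficiente pseudosfere}, which yields
$$\partial\Omega=\partial B(x_0,|x_0-z|).$$

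For the reader's orientation, the mechanism behind Corollary \ref{c: condizione sufficiente pseudosfere} is the following. A harmonic pseudosphere has $\mathcal{K}(\partial\Omega,x_0)=0$, so Corollary \ref{c: indice S e disuguaglianza} (equivalently, Theorem \ref{t: relazione KG e S_z} with $S_z\le1$) forces $|\partial\Omega|\le|\partial B|$, where $B=B(x_0,r)\subseteq\Omega$ and $r=|x_0-z|$. Together with the mean value identity, this surface comparison pins $\partial\Omega$ to $\partial B$.

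The argument therefore has no genuine obstacle. The real difficulty sits upstream, in Proposition \ref{p: indice sferico nel caso lipschitz piatto}, where one must control the half-space Poisson-type integral near $z$ using only the a.e. radial continuity of $\nabla\psi$ at $z'$. Here we simply invoke it.
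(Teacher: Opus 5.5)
Your proposal is correct and matches the paper's proof exactly: the paper also obtains this corollary by combining Proposition \ref{p: indice sferico nel caso lipschitz piatto}, which gives $S_z(\partial\Omega,x_0)\le 1$, with Corollary \ref{c: condizione sufficiente pseudosfere}. One small correction to your orientation remark: in the paper, the passage from $|\partial\Omega|\le|\partial B|$ to $\Omega=B$ does not use the mean value identity; it uses $B\subseteq\Omega$ together with the isoperimetric inequality to get $|\Omega\setminus B|=0$.
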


These theorems extend the results of \cite{FriLit}, \cite{Fic86} and, partially, \cite{LewVog02}. Indeed, the result obtained by Lewis and Vogel in \cite{LewVog02} requires very mild assumptions, but on the whole boundary of $\Omega$. Here, instead, we have obtained a harmonic characterization of Euclidean spheres which requires only local properties.

\section{Computation of Some Flatness Indices}\label{s: computations}

In this section we compute the spherical flatness index in some particular cases, proving the statements of Remark \ref{r: indice uguale ad 1} and of Proposition \ref{p: indice sferico nel caso lipschitz piatto}. \\
We start by computing the spherical flatness index in the case in which $\partial\Omega$ is flat in a neighborhood of the point $z$.

\begin{prop}\label{p: indice sferico nel caso piatto}
Let $\Omega\subseteq\mathbb{R}^n$ be a halfspace and let $x_0\in \Omega$. 
 Then  $T(\partial \Omega,x_0)$ is the singleton $\{z\}=\partial \Omega\cap \partial B(x_0,\operatorname{dist}(x_0,\partial \Omega))$ and 
$$S_z(\partial \Omega,x_0)=1.$$
\end{prop}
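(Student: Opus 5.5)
By Remark \ref{r: invarianza indice sferico}, the index is invariant under rigid motions, so I will normalize first. Take $\Omega=\{x\in\mathbb{R}^n : x_n<0\}$ and $x_0=(0,-r)$ with $r>0$; then $\operatorname{dist}(x_0,\partial\Omega)=r$.

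The touching set is found from $|x-x_0|^2=|x'|^2+r^2\ge r^2$ for every $x=(x',0)\in\partial\Omega$. Equality holds only when $x'=0$, so $T(\partial\Omega,x_0)=\{z\}$ with $z=0$.

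Next I compute the inner integral in \eqref{e: definizione spherical index}. For $\alpha\notin\overline\Omega$ we have $\alpha_n>0$. For $x\in\partial\Omega$, write $\alpha-x=(\alpha'-x',\alpha_n)$ and $\alpha-x_0=(\alpha',\alpha_n+r)$. Then
$$\langle\alpha-x,\alpha-x_0\rangle=\langle \alpha'-x',\alpha'\rangle+\alpha_n(\alpha_n+r).$$
Set $y=x'-\alpha'$ and $t=\alpha_n$. The integrand splits into three pieces:
$$r\,\frac{t}{(|y|^2+t^2)^{n/2}}+\frac{t^2}{(|y|^2+t^2)^{n/2}}-\frac{\langle y,\alpha'\rangle}{(|y|^2+t^2)^{n/2}}.$$
The integration runs over $|y+\alpha'|<R$ in $\mathbb{R}^{n-1}$.

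I handle the three pieces as $\alpha\to0$.

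\emph{First piece.} This is $r$ times the half-space Poisson kernel, up to its normalization. The classical identity is
$$\int_{\mathbb{R}^{n-1}}\frac{t}{(|y|^2+t^2)^{n/2}}\,dy=\frac{n\omega_n}{2},$$
where $\omega_n$ is the volume of the unit ball, so that $n\omega_n=|\partial B(0,1)|$. Because the kernel concentrates at $0$, restricting to $|y+\alpha'|<R$ with $\alpha'\to0$ still gives the limit $\frac{n\omega_n}{2}$. After the prefactor $\frac{2}{n\omega_n r}$, this piece contributes exactly $1$.

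\emph{Second piece.} It equals $t$ times a kernel of total mass $n\omega_n/2$, so it is bounded by $t\,\frac{n\omega_n}{2}\to0$.

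\emph{Third piece.} Its absolute value is at most
$$|\alpha'|\int_{|y|<R+1}\frac{|y|}{(|y|^2+t^2)^{n/2}}\,dy\le|\alpha'|\int_{|y|<R+1}|y|^{1-n}\,dy=C(R+1)|\alpha'|,$$
since $|y|^{1-n}$ is integrable in $\mathbb{R}^{n-1}$. This tends to $0$.

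So the inner limit exists and equals $n\omega_n/2$ for every $R$. Multiplying by $\frac{2}{n\omega_n r}$ gives $S_z=1$.

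The main point to verify is the first piece. Its limit must be uniform in the way $\alpha\to0$; that is, the ratio $|\alpha'|/\alpha_n$ may blow up, since $\alpha$ may approach $z$ tangentially. This is not a problem. The mass of $t(|y|^2+t^2)^{-n/2}$ outside the ball $|y|<R/2$ is at most $C\,t/R\to0$, independent of $\alpha'$. Once $|\alpha'|<R/2$, the domain $|y+\alpha'|<R$ contains the ball $|y|<R/2$. Hence the restricted integral tends to the full mass $n\omega_n/2$ regardless of the direction of approach.
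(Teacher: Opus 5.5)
Your normalization, the touching set, and the first two pieces are fine. In particular, comparing the restricted Poisson integral with its mass on $\{|y|<R/2\}$ and with its full mass is a clean way to get uniformity in the direction of approach.

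The gap is in the third piece. The estimate $\int_{|y|<R+1}|y|^{1-n}\,dy=C(R+1)$ is false. You are integrating over $\mathbb{R}^{n-1}$, where the exponent $n-1$ equals the dimension, so
\[
\int_{|y|<\rho}|y|^{1-n}\,dy=\sigma_{n-2}\int_0^\rho s^{-1}\,ds=+\infty,
\]
with $\sigma_{n-2}$ the measure of the unit sphere of $\mathbb{R}^{n-1}$. Your value would be correct in $\mathbb{R}^n$. Keeping $t>0$, the best your estimate gives is
\[
|\alpha'|\int_{|y|<R+1}\frac{|y|}{(|y|^2+t^2)^{n/2}}\,dy\simeq\sigma_{n-2}\,|\alpha'|\log\frac{R+1}{t}.
\]
This is not just slack in the estimate. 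Since $\int_{|\theta|=1}|\langle\theta,e\rangle|\,d\sigma(\theta)>0$ for a unit vector $e\in\mathbb{R}^{n-1}$, the integral of the \emph{absolute value} of the third integrand over $\{|y|<R/2\}$ is itself of order $|\alpha'|\log(R/t)$. So no argument that puts absolute values inside the integral can work along tangential approaches such as $\alpha_n=e^{-1/|\alpha'|^2}$. That is exactly the regime you flagged, and the $\liminf_{\alpha\to z}$ in the definition forces you to control it.

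The missing ingredient is cancellation, which is how the paper treats its term $I(\xi,t)$. Once $|\alpha'|<R/2$, the domain $\{|y+\alpha'|<R\}$ contains the ball $\{|y|<R/4\}$. On that ball, for fixed $t>0$, the integrand $\langle y,\alpha'\rangle(|y|^2+t^2)^{-n/2}$ is bounded and odd in $y$, so its integral vanishes. On the rest of the domain, $R/4\le|y|<2R$, the integrand is bounded by $|\alpha'|(R/4)^{1-n}$. That contribution is therefore at most $C(R)|\alpha'|\to0$, uniformly in $t$.

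With this replacement your proof is complete and is essentially the paper's: the Poisson part gives $1$, and the tangential part splits as $I=I^{(1)}+I^{(2)}$ with $I^{(1)}\equiv0$. One small slip: before multiplying by $\frac{2}{n\omega_n r}$, the inner limit is $r\,n\omega_n/2$, not $n\omega_n/2$.
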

\begin{proof}
In this proof we consider the splitting of $\mathbb{R}^n$ as $\mathbb{R}^{n-1}\times\mathbb{R}$ and we denote its points as $$(x,y)\in\mathbb{R}^{n-1}_x\times\mathbb{R}_y.$$
Using the invariance with respect to translations and rotations we can assume that $$\Omega=\{(y,t)\in\mathbb{R}^n \ : \ y\in\mathbb{R}^{n-1}, \ t<0\}$$ and we consider the point $x_0=(0,-r)\in \Omega$ with $r>0$. \\ Thus, we have that $T(\partial \Omega,x_0)=\{z\}$ with $z=(0,0)$. \\
We choose a point $$\alpha=(\xi,t)\in\mathbb{R}^{n-1}\times\mathbb{R} \ \mathrm{with} \ t>0.$$ In this way we have that $\alpha\notin\overline{\Omega}$ and we easily notice that $$\left(\alpha\to z \ \mathrm{with} \ \alpha\notin\overline{\Omega}\right) \Leftrightarrow \left(\xi\to0, \ t\to0\right).$$ 
We let $R>0$ and we consider a point $x=(y,0)\in\partial \Omega\cap B(z,R)$. \\
With these choices we have that
\begin{enumerate}
\item[(i)] $\alpha-x=(\xi-y,t)$;
\item[(ii)] $\alpha-x_0=(\xi,t+r)$;
\item[(iii)] $|\alpha-x|^2=|\xi-y|^2+t^2$;
\item[(iv)] $\langle \alpha-x,\alpha-x_0\rangle= (\xi-y)\cdot\xi+t(t+r)$,
\end{enumerate}
where with $\cdot$ we denote the scalar product in $\mathbb{R}^{n-1}$. \\
Hence, we have that $$\frac{2}{n\omega_n r}\frac{\langle\alpha-x,\alpha-x_0\rangle}{|x-\alpha|^n}=\frac{2}{n\omega_n r}\frac{(\xi-y)\cdot\xi + t(t+r)}{(|y-\xi|^2+t^2)^{\frac{n}{2}}}.$$
We define
\begin{equation}\label{e: definizione I1}
I_1:=\frac{t(t+r)}{(|y-\xi|^2+t^2)^{\frac{n}{2}}}
\end{equation}
and
\begin{equation}\label{e: definizione I2}
I_2:=\frac{(y-\xi)\cdot\xi}{(|y-\xi|^2+t^2)^{\frac{n}{2}}}.
\end{equation}
With these definitions at hands we have that 
\begin{equation}\label{e: scomposizione integranda}
\frac{2}{n\omega_n r}\frac{\langle\alpha-x,\alpha-x_0\rangle}{|x-\alpha|^n}=\frac{2}{n\omega_n r}\left(I_1+I_2\right).
\end{equation}
We notice that 
\begin{align*}
\int_{\partial \Omega\cap B(z,R)} I_1 \, d\sigma(x) &= \int_{|y|\leq R} \frac{t(t+r)}{(|\xi-y|^2+t^2)^{\frac{n}{2}}} \, dy \nonumber \\
&= \int_{\mathbb{R}^{n-1}} \frac{t(t+r)}{(|\xi-y|^2+t^2)^{\frac{n}{2}}} \, dy - \int_{|y|\geq R} \frac{t(t+r)}{(|\xi-y|+t^2)^{\frac{n}{2}}} \, dy.
\end{align*}
We start by studying the first term. We notice that it is invariant with respect to translations and so we have
\begin{align*}
\int_{\mathbb{R}^{n-1}} \frac{t(t+r)}{(|\xi-y|^2+t^2)^{\frac{n}{2}}} \, dy &= (t+r) \int_{\mathbb{R}^{n-1}} \frac{t}{(|y|^2+t^2)^{\frac{n}{2}}} \, dy \nonumber \\
&\stackrel{y=t\eta}{=} (t+r) \int_{\mathbb{R}^{n-1}} \frac{1}{(|\eta|^2+1)^{\frac{n}{2}}} \, d\eta.
\end{align*}
Since $\alpha\to z$ if and only if $\xi\to0$ and $t\to0$, we have that 
\begin{equation}\label{e: limite I1 prima parte}
\lim_{t\to0} (t+r)\int_{\mathbb{R}^{n-1}} \frac{1}{(|\eta|^2+1)^{\frac{n}{2}}} \, d\eta = r\frac{n\omega_n}{2}.
\end{equation}
We study now the second term. We notice that 
\begin{align*}
0 &\leq \int_{|y|\geq R} \frac{t(t+r)}{(|y-\xi|^2+t^2)^{\frac{n}{2}}} \, dy \nonumber \\
&\leq (t+r)\int_{|y|\geq R} \frac{t}{|y-\xi|^n} \, dy \nonumber \\
&\stackrel{\eta=y-\xi}{=} t(t+r)\int_{|\eta|\geq R-1} \frac{1}{|\eta|^n} \, d\eta,
\end{align*}
where in the last step we have assumed that $|\xi|<1$, since we want to compute the limit as $\alpha$ goes to $z$. \\
Now, we use the polar coordinates $\eta=\rho\theta$ with $\rho>0$ and $\theta\in\mathbb{R}^{n-1}$ with $|\theta|=1$. In this way we get
\begin{align*}
t(t+r)\int_{|\eta|\geq R-1} \frac{1}{|\eta|^n} \, d\eta &= t(t+r)\sigma_{n-1} \int_{R-1}^{+\infty} \frac{1}{\rho^2} \, d\rho \nonumber \\
&= t(t+r)\sigma_{n-1} \frac{1}{R-1}
\end{align*}
and this clearly converges to $0$ as $t\to0$. \\
Hence, we have that
\begin{equation}\label{e: limite I1 seconda parte}
\lim_{t\to0} \int_{|y|\geq R} \frac{t(t+r)}{(|\xi-y|^2+t^2)^{\frac{n}{2}}} \, dy =0.
\end{equation}
Recalling the definition of $I_1$ in \eqref{e: definizione I1} and combining \eqref{e: limite I1 prima parte} and \eqref{e: limite I1 seconda parte} we get
\begin{equation}\label{e: limite I1}
\lim_{\pedfrac{\alpha\to z}{\alpha\notin\overline{\Omega}}} \frac{2}{n\omega_n r} \int_{\partial \Omega\cap B(z,R)} I_1 \, d\sigma(x) =1.
\end{equation}
Now, we have to study $$\frac{2}{n\omega_n r} \int_{\partial \Omega\cap B(z,R)} I_2 \, d\sigma(x).$$ Recalling the definition of $I_2$ in \eqref{e: definizione I2}, we have that this term is given, up to the constant, by the following integral
\begin{equation}\label{e: definizione I(xi,t)}
I(\xi,t):=\int_{|y|\leq R} \frac{(\xi-y)\cdot\xi}{(|\xi-y|^2+t^2)^{\frac{n}{2}}} \, dy.
\end{equation}
Since, for $R>0$ fixed, we want to study the behavior of this integral as $(\xi,t)\to(0,0)$ we can assume that $\displaystyle |\xi|<\frac{R}{2}$. \\
We notice that
$$ I(\xi,t) = \int_{B(0,R)\cap B(\xi,\frac{R}{4})} \frac{(\xi-y)\cdot\xi}{(|\xi-t|^2+t^2)^{\frac{n}{2}}} \, dy + \int_{B(0,R)\backslash B(\xi,\frac{R}{4})} \frac{(\xi-y)\cdot\xi}{(|\xi-t|^2+t^2)^{\frac{n}{2}}} \, dy.$$
We define
\begin{equation}\label{e: definizione I1(xi,t)}
I^{(1)}(\xi,t):= \int_{B(0,R) \cap B(\xi,\frac{R}{4})} \frac{(\xi-y)\cdot\xi}{(|\xi-t|^2+t^2)^{\frac{n}{2}}} \, dy
\end{equation}
and
\begin{equation}\label{e: definizione I2(xi,t)}
I^{(2)}(\xi,t):= \int_{B(0,R) \backslash B(\xi,\frac{R}{4})} \frac{(\xi-y)\cdot\xi}{(|\xi-t|^2+t^2)^{\frac{n}{2}}} \, dy
\end{equation}
getting
\begin{equation}\label{e: scomposizione I(xi,t)}
I(\xi,t)=I^{(1)}(\xi,t)+I^{(2)}(\xi,t).
\end{equation}
We start by studying $I^{(1)}(\xi,t)$. \\
Since we have assumed that $\displaystyle |\xi|<\frac{R}{2}$, then $B(\xi,\frac{R}{4})\subseteq B(0,R)$. Hence, we have
\begin{align*}
I^{(1)}(\xi,t)&=\int_{B(\xi,\frac{R}{4})} \frac{(\xi-y)\cdot\xi}{(|\xi-y|^2+t^2)^{\frac{n}{2}}} \, dy \nonumber \\
&\stackrel{\eta=\xi-y}{=} \int_{B(0,\frac{R}{4})} \frac{\eta\cdot\xi}{(|\eta|^2+t^2)^{\frac{n}{2}}} \, d\eta \nonumber \\
&= \int_0^{\frac{R}{4}} \frac{1}{(\rho^2+t^2)^{\frac{n}{2}}} \biggl(\int_{|\eta|=\rho} \eta \cdot \xi \, d\sigma(\eta) \biggr) d\eta,
\end{align*}
where in the last step we have applied the coarea formula. \\
We notice that if $T$ is a rotation around the origin, then 
\begin{align*}
\int_{|\eta|=\rho} \eta \cdot \xi \, d\sigma(\eta) &= \int_{|\eta|=\rho} T\eta \cdot\xi \, d\sigma(\eta) \nonumber \\
&= \int_{|\eta|=1} \eta \cdot T\xi \, d\sigma(\eta).
\end{align*}
We choose a rotation $T$ such that $$T\xi=|\xi|e_1,$$ where $e_1=(1,0,\ldots,0)$. In this way we have that 
\begin{equation}\label{e: integrale I1 con rotazione}
\int_{|\eta|=1} \eta\cdot T\xi \, d\sigma(\eta)=|\xi|\int_{|\eta|=1} \eta_1 \, d\sigma(\eta),
\end{equation}
where $\eta_1$ is the first coordinate of $\eta$. \\
Now, we can rewrite \eqref{e: integrale I1 con rotazione} as 
\begin{align*}
|\xi|\int_{|\eta|=1} \eta_1 \, d\sigma(\eta) &= |\xi|\left( \int_{\pedfrac{|\eta|=1}{\eta_1\geq0}} \eta_1 \, d\sigma(\eta) + \int_{\pedfrac{|\eta|=1}{\eta_1\leq0}} \eta_1 \, d\sigma(\eta) \right) \nonumber \\
&= |\xi| \left( \int_{\pedfrac{|\eta|=1}{\eta_1\geq0}} \eta_1 \, d\sigma(\eta) - \int_{\pedfrac{|\eta|=1}{\eta_1\geq0}} \eta_1 \, d\sigma(\eta) \right) \nonumber \\
&=0.
\end{align*}
Thus, we have proved that
\begin{equation}\label{e: I1 uguale 0}
I^{(1)}(\xi,t)=0 \quad \forall\xi \ : \ |\xi|\leq\frac{R}{4}.
\end{equation}
Now, we want to study the behavior of $I^{(2)}(\xi,t)$. \\
Recalling the definition of $I^{(2)}(\xi,t)$ in \eqref{e: definizione I2(xi,t)} and applying the Cauchy - Schwartz inequality we have
\begin{align*}
|I^{(2)}(\xi,t)| &\leq \int_{B(0,R)\backslash B(\xi,\frac{R}{4})} \frac{|\xi-y| |\xi|}{(|\xi-y|^2+t^2)^{\frac{n}{2}}} \, dy \nonumber \\
&\leq 2R|\xi| \int_{B(0,R)} \frac{1}{\left(\left(\frac{R}{4}\right)^2+t^2\right)^{\frac{n}{2}}} \, dy \nonumber \\
&= 2R|\xi| \left(\frac{4}{R}\right)^{\frac{n}{2}}.
\end{align*}
Hence, for every fixed $R>0$, we have that
\begin{equation}\label{e: limite I2 uguale 0}
\lim_{(\xi,t)\to(0,0)} I^{(2)}(\xi,t)=0.
\end{equation}
Thus, recalling \eqref{e: scomposizione I(xi,t)} and combining \eqref{e: I1 uguale 0} and \eqref{e: limite I2 uguale 0}, we have that for every $R>0$
\begin{equation}\label{e: limite I(xi,t) uguale 0}
\lim_{(\xi,t)\to(0,0)} I(\xi,t)=0.
\end{equation}
Hence, recalling \eqref{e: scomposizione integranda} and combining \eqref{e: limite I1} and \eqref{e: limite I(xi,t) uguale 0}, we have proved that $$S_z(\partial \Omega,x_0)=1.$$
\end{proof}

We consider now the case in which $\partial\Omega$ is a Euclidean sphere containing the point $x_0$.
\begin{prop}\label{p: indice sferico nel caso sfera non centrata}
Let $x_0\in\mathbb{R}^n$ and $R>0$. Assume that $\Omega$ is the Euclidean ball $B(x_0,R)$ and let $x_1\in\Omega$. \\ Then, if $x_1\neq x_0$, $T(\partial \Omega,x_1)$ is the singleton $\{z\}=\partial \Omega\cap \partial B(x_1,\operatorname{dist}(x_1,\partial \Omega))$ and $$S_z(\partial\Omega,x_1)=1.$$ Instead, if $x_1=x_0$, then $T(\partial\Omega,x_1)=\partial\Omega$ and $$S_z(\partial\Omega,x_1)=1 \quad \forall z\in\partial\Omega.$$
\end{prop}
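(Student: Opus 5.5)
By the invariance of Remark \ref{r: invarianza indice sferico} I may assume $x_0=0$, so $\Omega=B(0,R)$. If $x_1\neq0$, the nearest boundary point is $z=R\,x_1/|x_1|$. It is unique, since $|x_1-x|$ for $|x|=R$ is strictly minimized in the direction of $x_1$. Moreover $r:=\operatorname{dist}(x_1,\partial\Omega)=R-|x_1|$ and $x_1=(R-r)e$ with $e:=z/R$. If $x_1=0$, every $z\in\partial\Omega$ is a touching point, $r=R$, and the same formula $x_1=(R-r)e$ holds with $e=z/R$. So both cases are treated at once. The idea is to decompose the kernel into a piece given exactly by the exterior Poisson kernel of the sphere, plus pieces that either vanish as the neighbourhood shrinks or reduce to the odd term already treated in Proposition \ref{p: indice sferico nel caso piatto}.

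For $|x|=R$ one has $2\langle\alpha-x,\alpha\rangle=|\alpha|^2-R^2+|\alpha-x|^2$. Hence
\begin{equation*}
\langle\alpha-x,\alpha-x_1\rangle=\tfrac12(|\alpha|^2-R^2)+\tfrac12|\alpha-x|^2-(R-r)\langle\alpha-x,e\rangle .
\end{equation*}
On the sphere, $\langle z-x,e\rangle=|z-x|^2/(2R)$. I therefore write $\alpha-x=(\alpha-z)+(z-x)$ and split $\alpha-z=t e+\tau$ with $t:=\langle\alpha-z,e\rangle$ and $\tau\perp e$. Note $t>0$ is comparable to $|\alpha|-R$ when $\alpha\to z$ nontangentially; in general $|\alpha|^2-R^2=2Rt+|\alpha-z|^2$. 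The integrand $\langle\alpha-x,\alpha-x_1\rangle/|x-\alpha|^n$ then becomes the sum of four terms:
\begin{itemize}
\item[(a)] $\bigl(Rt-(R-r)t\bigr)/|x-\alpha|^n=rt/|x-\alpha|^n$;
\item[(b)] remainders bounded by $|\alpha-z|^2/|x-\alpha|^n+|x-\alpha|^{2-n}+|z-x|^2/|x-\alpha|^n$;
\item[(c)] the tangential term $-(R-r)\langle\tau,e\rangle/|x-\alpha|^n$, which in fact vanishes identically since $\tau\perp e$;
\item[(d)] additionally, I have to keep track that no hidden tangential term survives in $|\alpha|^2$, which is already absorbed in $|\alpha-z|^2$.
\end{itemize}

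For term (a), I use the exterior Poisson formula
\begin{equation*}
\int_{\partial B_R}\frac{|\alpha|^2-R^2}{n\omega_nR|x-\alpha|^n}\,d\sigma=\Bigl(\frac R{|\alpha|}\Bigr)^{n-2}\to1 .
\end{equation*}
Its mass concentrates at $z$ as $\alpha\to z$, so restricting to $B(z,\rho)$ costs nothing in the limit. Then $\int_{\partial\Omega\cap B(z,\rho)}2Rt\,|x-\alpha|^{-n}\,d\sigma\to n\omega_nR$, so (a) tends to $n\omega_n r/2$ times the limit of $t/(t+|\alpha-z|^2/(2R))$. For terms (b): $|x-\alpha|^{2-n}$ and $|z-x|^2|x-\alpha|^{-n}$ are uniformly integrable on $(n-1)$-dimensional caps, giving contributions $O(\rho)$ that vanish as $\rho\searrow0$. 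The term $|\alpha-z|^2|x-\alpha|^{-n}$ is bounded by $|\alpha-z|^2/\operatorname{dist}(\alpha,\partial\Omega)\cdot O(1)$.

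The main obstacle is precisely this: the liminf in the definition is over all $\alpha\to z$ outside $\overline\Omega$, including tangential approach. There the quadratic part $|\alpha-z|^2$ of $|\alpha|^2-R^2$ is not $o(t)$, and the cancellation in the flat case came from an odd tangential term. I would handle it by working with the full Poisson expression $\tfrac12(|\alpha|^2-R^2)$ rather than splitting off $|\alpha-z|^2$, so that no tangential remainder appears. The only remaining non-Poisson piece is then
\begin{equation*}
-(R-r)\bigl(|\alpha|-R+\tfrac12|\alpha-z|^2/R\bigr)\text{-type}
\end{equation*}
term coming from $\langle\alpha-z,e\rangle$. On the sphere this coefficient is dominated by $(|\alpha|^2-R^2)/(2R)$ up to $O(|\alpha-z|^2)$. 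Because $\tau\perp e$ contributes nothing, the dipole term reduces to $-(R-r)\langle\alpha-z,e\rangle$. I would write $\langle\alpha-z,e\rangle=(|\alpha|^2-R^2)/(2R)-\bigl(|\alpha-z|^2-\ldots\bigr)$ and estimate the correction by comparison with the flat computation, after flattening the cap by a smooth graph (a Lipschitz parametrization) and invoking the $I^{(1)},I^{(2)}$ argument of Proposition \ref{p: indice sferico nel caso piatto}. The expected outcome is that the Poisson parts combine to $\tfrac{n\omega_n}{2}\bigl(R-(R-r)\bigr)=\tfrac{n\omega_n r}{2}$. Multiplying by $2/(n\omega_n r)$ then gives $S_z(\partial\Omega,x_1)=1$ in both cases.
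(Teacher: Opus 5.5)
\paragraph{Verdict.} Your setup is correct and matches the paper's Lemma: the reduction to $x_0=0$, uniqueness of $z$, $x_1=(R-r)e$, the identity $2\langle\alpha-x,\alpha\rangle=|\alpha|^2-R^2+|\alpha-x|^2$ on $\partial\Omega$, and the Poisson part. The proof breaks at the dipole term $-(R-r)\langle\alpha-x,e\rangle|x-\alpha|^{-n}$.

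\paragraph{Where it fails.} $S_z$ is a $\liminf$ over \emph{all} $\alpha\to z$ with $\alpha\notin\overline\Omega$, so tangential approach must be controlled. Your splitting via $\alpha-z=te+\tau$ and $\langle z-x,e\rangle=|z-x|^2/(2R)$ produces pieces that are individually unbounded there, and $t$ can be negative. Take $\alpha=(R+\epsilon)\theta$ with $|\theta|=1$, $|R\theta-z|=\delta$, $\epsilon=\delta^3$, $\delta\to0$. Then $t\approx-\delta^2/(2R)$ and $|\alpha|^2-R^2\approx 2R\epsilon$. The exact identity $\int_{\partial\Omega}|x-\alpha|^{-n}\,d\sigma=n\omega_nR^{n-1}|\alpha|^{2-n}/(|\alpha|^2-R^2)$ then gives:
\begin{itemize}
\item the cap integral of (a) behaves like $\frac{n\omega_n r}{2}\cdot\frac{2Rt}{|\alpha|^2-R^2}\to-\infty$;
\item $|\alpha-z|^2\int_{\partial\Omega\cap B(z,\rho)}|x-\alpha|^{-n}\,d\sigma$ tends to $+\infty$;
\item $\int_{\partial\Omega\cap B(z,\rho)}|z-x|^2|x-\alpha|^{-n}\,d\sigma\gtrsim\delta^2/\epsilon$ tends to $+\infty$.
\end{itemize}
So three of your claims are false in general: that $\int 2Rt|x-\alpha|^{-n}\,d\sigma\to n\omega_nR$, that $|z-x|^2|x-\alpha|^{-n}$ is uniformly integrable on caps, and that the terms in (b) are $O(\rho)$. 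Only their sum is controlled. Your estimates actually prove the limit along nontangential approach, which gives $S_z\le 1$ and nothing more.

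The repair in your last paragraph is announced rather than carried out, and it is not a routine transfer. The flat-case identity $I^{(1)}=0$ relied on exact oddness of the kernel about the foot point, which is lost on the sphere. A crude bound on the correction, $|\alpha-z|\int_{\partial\Omega\cap B(z,\rho)}|x-\alpha|^{1-n}\,d\sigma\approx c\,|\alpha-z|\log\bigl(\rho/\operatorname{dist}(\alpha,\partial\Omega)\bigr)$, need not tend to $0$ along tangential paths.

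\paragraph{The missing idea: harmonicity.} For fixed $v$, the map $x\mapsto\langle\alpha-x,v\rangle|x-\alpha|^{-n}$ is a directional derivative of the fundamental solution with pole $\alpha$, hence harmonic in $\mathbb{R}^n\setminus\{\alpha\}$. Its integral over the whole sphere is given exactly by Gauss's mean value theorem. Its integral over $\partial\Omega\setminus B(z,\rho)$ converges by dominated convergence, whatever the mode of approach.

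The paper uses this with $v=x_0-x_1$. It then evaluates the remaining off-cap limit through its Appendix integral, which your identity $\langle z-x,e\rangle=|z-x|^2/(2R)$ reduces at once to $\frac{1}{2R}\int_{\partial\Omega}|z-x|^{2-n}\,d\sigma=\frac{n\omega_n}{2}$.

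In your normalization it is even quicker. Since $x_1=(R-r)z/R$,
\[
\alpha-x_1=\frac{r}{R}\,\alpha+\frac{R-r}{R}\,(\alpha-z).
\]
\begin{itemize}
\item The first piece is the centered case: your Poisson part plus $\frac12|x-\alpha|^{2-n}$. It contributes $\frac{r}{R}\cdot\frac{n\omega_n R}{2}$ in the double limit.
\item For the second piece, the full-sphere integral equals $n\omega_nR^{n-1}\langle\alpha,\alpha-z\rangle|\alpha|^{-n}\to0$. The off-cap integral is at most $C(\rho)|\alpha-z|\to0$. So its cap integral tends to $0$ for every approach.
\end{itemize}
Hence the cap integral has $\liminf$ and $\limsup$ as $\alpha\to z$ both equal to $\frac{n\omega_n r}{2}+O(\rho)$, and $S_z(\partial\Omega,x_1)=1$. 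The case $x_1=x_0$ (that is, $r=R$) is included, and the decomposition into $t$, $\tau$ and $|z-x|^2/(2R)$ becomes unnecessary.
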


To prove this Proposition, we start by proving the following lemma in which we assume that $\Omega$ is a Euclidean ball centered at the point $x_0$.
\begin{lemma}\label{l: indice sferico nel caso sfera centrata}
Let $x_0\in\mathbb{R}^n$ and $r>0$. Assume that $\Omega$  is the Euclidean ball $B(x_0,r)$. Then, we have that $T(\partial \Omega,x_0)=\partial\Omega$ and
$$S_z(\partial\Omega,x_0)=1 \quad \forall z\in\partial\Omega.$$
\end{lemma}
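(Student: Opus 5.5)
By the invariance of $S_z$ under rigid motions (Remark \ref{r: invarianza indice sferico}), I would take $x_0=0$, so that $\partial\Omega=\partial B(0,r)$. Trivially every $z\in\partial\Omega$ satisfies $|z|=r=\mathrm{dist}(0,\partial\Omega)$, hence $T(\partial\Omega,0)=\partial\Omega$.

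The key step is an algebraic splitting of the numerator. For $|x|=r$,
$$\langle\alpha-x,\alpha\rangle=\tfrac12\bigl(|\alpha|^2-|x|^2+|\alpha-x|^2\bigr)=\tfrac12\bigl(|\alpha|^2-r^2\bigr)+\tfrac12|\alpha-x|^2 .$$
The integrand in \eqref{e: definizione spherical index} therefore becomes
$$\frac{\langle\alpha-x,\alpha\rangle}{|x-\alpha|^n}=\frac{|\alpha|^2-r^2}{2|x-\alpha|^n}+\frac{1}{2|x-\alpha|^{n-2}} .$$

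\textbf{The second term is harmless.} Its singularity behaves like $|x-\alpha|^{2-n}$ on an $(n-1)$-dimensional surface. Hence $\int_{\partial B\cap B(z,R)}|x-\alpha|^{2-n}\,d\sigma\le CR$, uniformly for $\alpha$ near $z$ outside $\overline B$ (for $n=2$ the term is simply bounded). Its contribution therefore vanishes in the outer limit $R\searrow0$.

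\textbf{The first term is an exterior Poisson kernel.} For $|\alpha|>r$, the function
$$x\mapsto\frac{|\alpha|^2-r^2}{n\omega_n r\,|x-\alpha|^n}$$
is, up to sign, the Poisson kernel of $B(0,r)$ with pole outside. I would compute its total mass by Kelvin inversion. Set $\alpha^*=r^2\alpha/|\alpha|^2$. For $|x|=r$ one has $|x-\alpha|=(|\alpha|/r)|x-\alpha^*|$ and $|\alpha|^2-r^2=(|\alpha|^2/r^2)(r^2-|\alpha^*|^2)$. Applying the interior Poisson formula $\int_{\partial B}\frac{r^2-|\alpha^*|^2}{n\omega_n r|x-\alpha^*|^n}\,d\sigma=1$ then gives
$$\int_{\partial B}\frac{|\alpha|^2-r^2}{n\omega_n r|x-\alpha|^n}\,d\sigma=\Bigl(\frac{r}{|\alpha|}\Bigr)^{n-2}\longrightarrow1 \quad\text{as } \alpha\to z.$$
On $\partial B\setminus B(z,R)$ the denominator stays bounded below while $|\alpha|^2-r^2\to0$. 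So the mass outside $B(z,R)$ tends to $0$, and the integral over $\partial B\cap B(z,R)$ tends to $1$.

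\textbf{Conclusion.} Combining the two terms, for each fixed $R$ the inner liminf multiplied by $\frac{2}{n\omega_n r}$ equals $1+O(R)$. Letting $R\searrow0$ yields $S_z(\partial\Omega,x_0)=1$.

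The only delicate point is the uniform bound on the $|x-\alpha|^{2-n}$ term. I expect it to follow from local flattening of the sphere near $z$, since $|x-\alpha|\ge c|x-z|$ fails only mildly: $\alpha$ lies outside the ball, so $|x-\alpha|$ is comparable to $|x-\alpha'|$, where $\alpha'$ is the radial projection of $\alpha$ onto the sphere. The estimate then reduces to $\int_{|y|<R}|y-\xi|^{2-n}\,dy\le CR$ in $\mathbb{R}^{n-1}$.
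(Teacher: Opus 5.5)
Your proof is correct and follows the paper's route. It uses the same splitting of the integrand into a $|x-\alpha|^{2-n}$ part, which contributes $O(R)$, plus an exterior Poisson-type kernel whose mass tends to $1$ and concentrates at $z$. The only real difference is how the total mass $n\omega_n r^{n-1}|\alpha-x_0|^{2-n}$ is obtained: the paper applies the Gauss mean value theorem to the harmonic function $x\mapsto(|\alpha-x_0|^2-|x-x_0|^2)/|x-\alpha|^n$, while you use Kelvin inversion and the interior Poisson formula.

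Your uniform bound for the $|x-\alpha|^{2-n}$ term is a cleaner justification than the paper's appeal to dominated convergence, which needs exactly this kind of estimate when $\alpha$ approaches $z$ non-radially. The comparability you assert does hold, via $|x-\alpha'|\le|x-\alpha|+(|\alpha|-r)\le 2|x-\alpha|$ for $|x|=r$.
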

\begin{proof}
First of all, we remark that
\begin{align*}
\frac{|x-x_0|^2-|\alpha-x_0|^2}{|x-\alpha|^n} &= \frac{|(x-\alpha)+(\alpha-x_0)|^2-|\alpha-x_0|^2}{|x-\alpha|^n} \nonumber \\
&= \frac{|x-\alpha|^2 + 2\langle x-\alpha, \alpha-x_0 \rangle}{|x-\alpha|^n} \nonumber \\
&= \frac{1}{|x-\alpha|^{n-2}}-2\frac{\langle \alpha-x, \alpha-x_0 \rangle}{|x-\alpha|^n}.
\end{align*}
From this it follows that
\begin{equation} \label{e: scomposizione integranda sferica}
2\frac{\langle \alpha-x, \alpha-x_0 \rangle}{|x-\alpha|^n} = \frac{1}{|x-\alpha|^{n-2}} + \frac{|\alpha-x_0|^2-|x-x_0|^2}{|x-\alpha|^n}.
\end{equation}
We define 
\begin{equation} \label{e: definizione k_alpha}
k_{\alpha}(x):=\frac{|\alpha-x_0|^2-|x-x_0|^2}{|x-\alpha|^n}
\end{equation}
and so by \eqref{e: scomposizione integranda sferica} it follows that for every $R>0$
\begin{equation} \label{e: scomposizione integrale sfera}
2\int_{\partial \Omega \cap B(z,R)} \frac{\langle \alpha-x, \alpha-x_0 \rangle}{|x-\alpha|^n} \, d\sigma(x) = \int_{\partial \Omega \cap B(z,R)} \left( \frac{1}{|x-\alpha|^{n-2}} + k_{\alpha}(x) \right) d\sigma(x).
\end{equation}
We start by studying the first term. Applying Lebesgue dominated convergence theorem we have that
\begin{align*} 
\lim_{\pedfrac{\alpha\to z}{\alpha\notin\overline{\Omega}}} \int_{\partial \Omega \cap B(z,R)} \frac{1}{|x-\alpha|^{n-2}} \, d\sigma(x) &= \int_{\partial \Omega \cap B(z,R)} \frac{1}{|x-z|^{n-2}} \, d\sigma(x) \nonumber \\
&\stackrel{y=x-z}{=}\int_{(-z+\partial \Omega)\cap B(0,R)} \frac{1}{|y|^{n-2}} \, d\sigma(y).
\end{align*}
In the last integral, we use the polar coordinates in $\mathbb{R}^{n-1}$, letting $y=\rho\omega$ with $\rho\in(0,R)$ and $\omega\in\mathbb{R}^{n-1}$ with $|\omega|=1$. In this way we get
$$ \int_{(-z+\partial \Omega)\cap B(0,R)} \frac{1}{|y|^{n-2}} \, d\sigma(y) = \sigma_{n-1} \int_{0}^R \frac{\rho^{n-2}}{\rho^{n-2}} \, d\rho = \sigma_{n-1} R$$
and so
\begin{equation}\label{e: limite primo integrale sferico}
\lim_{R\to0}\left(\lim_{\pedfrac{\alpha\to z}{\alpha\notin\overline{\Omega}}} \int_{\partial \Omega \cap B(z,R)} \frac{1}{|x-\alpha|^{n-2}} \, d\sigma(x) \right)=0.
\end{equation}
Now, we study the second term. First of all, we rewrite it as
\begin{equation}\label{e: scomposizione integrale k_alpha}
\int_{\partial \Omega \cap B(z,R)} k_{\alpha}(x) \, d\sigma(x) = \int_{\partial \Omega} k_{\alpha}(x) \, d\sigma(x) - \int_{\partial \Omega \backslash B(z,R)} k_{\alpha} (x) \, d\sigma(x).
\end{equation}
We notice that $k_{\alpha}$ is harmonic in $\mathbb{R}^n\backslash\{\alpha\}$ if $\alpha\notin\overline{\Omega}$ since it is obtained as difference of two harmonic functions. Then, since $\partial \Omega = \partial B(x_0,r)$ is a sphere, we can apply Gauss mean value theorem getting 
\begin{align*}
\int_{\partial \Omega} k_{\alpha} \, d\sigma(x) &= n\omega_n r^{n-1} k_{\alpha}(x_0) \nonumber \\
&= n\omega_n r^{n-1} \frac{|\alpha-x_0|^2-0}{|x_0-\alpha|^n}.
\end{align*}
Thus, we have proved that
\begin{equation}\label{e: integrale 1 k_alpha}
\int_{\partial \Omega} k_{\alpha}(x) \, d\sigma(x) = n\omega_n r.
\end{equation}
Applying Lebesgue dominated convergence theorem we have 
\begin{equation}\label{e: integrale 2 k_alpha}
\lim_{\pedfrac{\alpha\to z}{\alpha\notin\overline{\Omega}}}\int_{\partial \Omega \backslash B(z,R)} k_{\alpha}(x) d\sigma(x) = \int_{\partial \Omega \backslash B(z,R)} \frac{r^2-|x-x_0|^2}{|x-z|^n} \, d\sigma(x)=0.
\end{equation}
Thus, recalling \eqref{e: scomposizione integrale k_alpha} and combining \eqref{e: integrale 1 k_alpha} and \eqref{e: integrale 2 k_alpha}, we have 
\begin{equation}\label{e: limite secondo integrale sferico}
\lim_{\pedfrac{\alpha\to z}{\alpha\notin\overline{\Omega}}} \int_{\partial \Omega\cap B(z,R)} k_{\alpha}(x) \, d\sigma(x)=n\omega_n r.
\end{equation}
Now, using \eqref{e: limite primo integrale sferico} and \eqref{e: limite secondo integrale sferico} in \eqref{e: scomposizione integrale sfera}, we get
$$\lim_{R\to0}\left(\lim_{\pedfrac{\alpha\to z}{\alpha\notin\overline{\Omega}}} 2\int_{\partial \Omega \cap B(z,R)} \frac{\langle \alpha-x, \alpha-x_0 \rangle}{|x-\alpha|^n} \, d\sigma(x)\right) = n\omega_n r.$$
Hence, recalling the definition of spherical flatness index we have proved that $$S_z(\partial \Omega,x_0)=1.$$
\end{proof}

With this lemma at hands, we are now ready to prove Proposition \ref{p: indice sferico nel caso sfera non centrata}.
\begin{proof}[Proof of Proposition \ref{p: indice sferico nel caso sfera non centrata}]
As we have done in the previous proof, we consider the splitting of $\mathbb{R}^n$ as $\mathbb{R}^{n-1}\times\mathbb{R}$ and we denote its points as $$(x,y)\in\mathbb{R}^{n-1}_x\times\mathbb{R}_y.$$
Since by Remark \ref{r: invarianza indice sferico} the spherical flatness index is invariant with respect under rigid motions, we can assume that $R=1$ and $x_0=(0,-1)=-e_n$, where $e_n=(0,\ldots,0,1)$. \\
We let $0<r<R$ and we consider the point $x_1=(0,-r)\in\Omega$ and the point $z=(0,0)\in T(\partial\Omega,x_1)$. \\
We notice that with these choices we have that $|z-x_1|=r$ and so by definition of spherical flatness index we have that 
\begin{equation}\label{e: indice sferico sfera non centrata}
S_z(\partial\Omega,x_1)=\frac{2}{n\omega_n r} \liminf_{\rho\searrow0}\left(\liminf_{\pedfrac{\alpha\to z}{\alpha\notin\overline{\Omega}}} \int_{\partial\Omega\cap B(z,\rho)} \frac{\langle \alpha-x, \alpha-x_1\rangle}{|x-\alpha|^n} \, d\sigma(x) \right).
\end{equation}
We notice that  $$\langle \alpha-x, \alpha-x_1\rangle =  \langle \alpha-x, \alpha-x_0\rangle  + \langle \alpha-x, x_0-x_1\rangle.$$
Thus, letting
\begin{equation}\label{e: definizione i_alpha}
I_{\alpha}:=\int_{\partial\Omega \cap B(z,\rho)} \frac{\langle \alpha-x, \alpha-x_0 \rangle}{|x-\alpha|^n} \, d\sigma(x)
\end{equation}
and
\begin{equation}\label{e: definizione j_alpha}
J_{\alpha}:=\int_{\partial\Omega\cap B(z,\rho)} \frac{\langle \alpha-x, x_0-x_1 \rangle}{|x-\alpha|^n} \, d\sigma(x),
\end{equation}
we have that
\begin{equation}\label{e: splitting integrale sfera non centrata}
\int_{\partial\Omega\cap B(z,\rho)} \frac{\langle \alpha-x, \alpha-x_1 \rangle}{|x-\alpha|^n} \, d\sigma(x) =I_{\alpha}+J_{\alpha}.
\end{equation}
Hence, combining \eqref{e: indice sferico sfera non centrata} and \eqref{e: splitting integrale sfera non centrata}, we have that
\begin{equation}\label{e: indice sferico sfera non centrata con splitting}
S_z(\partial\Omega,x_1)=\frac{2}{n\omega_n r}\liminf_{\rho\searrow0}\left(\liminf_{\pedfrac{\alpha\to z}{\alpha\notin\overline{\Omega}}}\left(I_{\alpha}+J_{\alpha}\right)\right).
\end{equation}
We start by studying the limit of $I_{\alpha}$. \\
We notice that 
\begin{align*}
\frac{2}{n\omega_n r}\lim_{\alpha\to z}I_{\alpha} &= \frac{R}{r} \lim_{\alpha\to z} \frac{2}{n\omega_nR}I_{\alpha} \nonumber \\
&=\frac{R}{r}(1+o(\rho)),
\end{align*} where in the last equality we have applied Lemma \ref{l: indice sferico nel caso sfera centrata} since $\Omega$ is a Euclidean ball centered at the point $x_1$ with radius $R$. \\
Hence, we have proved that
\begin{equation}\label{e: limite i_alpha}
\frac{2}{n\omega_n r}\lim_{\rho\to0}\left(\lim_{\alpha\to z} I_{\alpha}\right)=\frac{R}{r}\stackrel{R=1}{=}\frac{1}{r}.
\end{equation}
Now, we want to study the limit of $J_{\alpha}$. \\
First of all, we rewrite it as 
\begin{align*}
J_{\alpha} &= \int_{\partial\Omega \cap B(z,\rho)} \frac{\langle \alpha-x, x_0-x_1 \rangle}{|x-\alpha|^n} \, d\sigma(x) \nonumber \\
&=\int_{\partial\Omega} \frac{\langle \alpha-x, x_0-x_1 \rangle}{|x-\alpha|^n} \, d\sigma(x) - \int_{\partial\Omega\backslash B(z,\rho)} \frac{\langle \alpha-x, x_0-x_1 \rangle}{|x-\alpha|^n} \, d\sigma(x).
\end{align*}
Since the map $$x\mapsto\frac{\langle \alpha-x, x_0-x_1 \rangle}{|x-\alpha|^n}$$ is harmonic in $\mathbb{R}^n\backslash\{\alpha\}$ and $\partial\Omega$ is a Euclidean sphere centered at the point $x_0$, we can apply Gauss mean value theorem for harmonic functions getting
\begin{align*}
\int_{\partial\Omega}\frac{\langle \alpha-x, x_0-x_1 \rangle}{|x-\alpha|^n} \, d\sigma(x) &= n\omega_n R^{n-1} \fint_{\partial\Omega} \frac{\langle \alpha-x, x_0-x_1 \rangle}{|x-\alpha|^n} \, d\sigma(x) \nonumber \\
&=n\omega_n R^{n-1}\frac{\langle \alpha-x_0, x_0-x_1 \rangle}{|x_0-\alpha|^n}.
\end{align*}
Thus, we have that 
\begin{equation}\label{e: limite 1 primo termine j_alpha}
\lim_{\alpha\to z} \int_{\Omega} \frac{\langle \alpha-x, x_0-x_1 \rangle}{|x-\alpha|^n} \, d\sigma(x) = n\omega_n R^{n-1} \frac{\langle z-x_0, x_0-x_1 \rangle}{|x_0-z|^n}.
\end{equation}
With our choices, we have
\begin{itemize}
\item[(i)] $z-x_0=(0,R)$;
\item[(ii)] $x_0-x_1=(0,-R+r)$; 
\item[(iii)] $\langle z-x_0, x_0-x_1 \rangle = (-R+r)R$;
\item[(iv)] $|x_0-z|^n=R^n$.
\end{itemize}
Hence, from \eqref{e: limite 1 primo termine j_alpha} it follows that 
\begin{equation}\label{e: limite primo termine j_alpha} 
\lim_{\alpha\to z} \int_{\Omega} \frac{\langle \alpha-x, x_0-x_1 \rangle}{|x-\alpha|^n} \, d\sigma(x)=n\omega_nR^{n-1}\frac{R(-R+r)}{R^n}=n\omega_n(-R+r).
\end{equation}
Now we have to study $\displaystyle \int_{\partial\Omega\backslash B(z,\rho)} \frac{\langle \alpha-x, x_0-x_1\rangle}{|x-\alpha|^n} \, d\sigma(x)$. \\
Applying Lebesgue dominated convergence theorem, we have that 
\begin{equation}\label{e: limite 1 secondo termine j_alpha}
\lim_{\alpha\to z} \int_{\partial\Omega \backslash B(z,\rho)} \frac{\langle \alpha-x, x_0-x_1 \rangle}{|x-\alpha|^n} \, d\sigma(x) = \int_{\partial\Omega \backslash B(z,\rho)} \frac{\langle z-x, x_0-x_1 \rangle}{|z-x|^n} \, d\sigma(x).
\end{equation}
With our choices we have that $$\langle z-x, x_0-x_1 \rangle = -x_n(-R+r)$$ and so from \eqref{e: limite 1 secondo termine j_alpha} it follows that
$$\lim_{\alpha\to z} \int_{\partial\Omega \backslash B(z,\rho)} \frac{\langle \alpha-x, x_0-x_1 \rangle}{|x-\alpha|^n} \, d\sigma(x)=(R-r)\int_{\partial\Omega \backslash B(z,\rho)} \frac{x_n}{|x|^n} \, d\sigma(x).$$
Applying Beppo Levi theorem we have $$\lim_{\rho\to0} (R-r)\int_{\partial\Omega \backslash B(z,\rho)} \frac{x_n}{|x|^n} \, d\sigma(x) =(R-r)\int_{\partial\Omega} \frac{x_n}{|x|^n} \, d\sigma(x).$$
Using the following fact (see Appendix)
\begin{equation}\label{e: integrale notevole}
\int_{\partial\Omega}\frac{x_n}{|x|^n} \, d\sigma(x)=-\frac{n\omega_n}{2},
\end{equation}
we have that
\begin{equation}\label{e: limite secondo termine j_alpha}
\lim_{\rho\to0}\left(\lim_{\alpha\to z} \int_{\partial\Omega \backslash B(z,\rho)} \frac{\langle \alpha-x, x_0-x_1 \rangle}{|x-\alpha|^n} \, d\sigma(x)\right) = -\frac{n\omega_n}{2}(R-r).
\end{equation}
Thus, combining \eqref{e: limite primo termine j_alpha} and \eqref{e: limite secondo termine j_alpha}, and recalling that we have chosen $R=1$ we get
\begin{equation}\label{e: limite j_alpha}
\lim_{\rho\to0}\left(\lim_{\alpha\to z} \frac{2}{n\omega_n r} J_{\alpha}\right)= 1-\frac{1}{r}.
\end{equation}
Thus, combining \eqref{e: splitting integrale sfera non centrata}, \eqref{e: limite i_alpha} and \eqref{e: limite j_alpha} we have that $$S_z(\partial\Omega,x_1)=1.$$
\end{proof}

Finally, we prove Proposition \ref{p: indice sferico nel caso lipschitz piatto}.
\begin{proof}[Proof of Proposition \ref{p: indice sferico nel caso lipschitz piatto}]
Using the invariance of the spherical index w.r.t. translations, we may assume without lost of generality that $x_0=0$. \\
We divide the proof of this proposition into steps. 

\medbreak

\textsc{\textbf{Step 1.}} In this step we try to characterize the boundary of $\Omega$ near a point $z\in T(\partial \Omega,0)$. \\ Using the invariance w.r.t. rotations, we can assume that $z=(0,r)\in\mathbb{R}^{n-1}\times\mathbb{R}$, where $r=\mathrm{dist}(0,\partial \Omega)$. \\
Since $\partial\Omega$ is Lipschitz flat at the point $z\in T(\partial\Omega,0)$, there exist positive constants $a,b>0$ and $R_0>0$, with $a<r<b$, and a function $\psi:\widehat{B}_{R_0}(0)\rightarrow(a,b)$ such that
\begin{itemize}
      \item[(i)] $\psi$ is Lipschitz, differentiable and $\nabla\psi(t\xi)\to\nabla\psi(0)$ as $t\to0$ 
			             for a.e. $\xi\in\mathbb{R}^{n-1}$;
			\item[(ii)] $\partial \Omega \cap U_{R_0}(a,b)=\left\{(y,\psi(y)) \ : \ y\in\widehat{B}_{R_0}(0)\right\}$;
			\item[(iii)] $ \Omega \cap U_{R_0}(a,b)=\left\{(y,t) \ : \ y\in\widehat{B}_{R_0}(0), \ a<t<\psi(y)<b\right\}$;
			\item[(iv)] $z=(0,r)=(0,\psi(0))$;
			\item[(v)] $\nabla\psi(0)=0$ since the ball $B(0,r)$ is tangent to $\partial \Omega$ at $z$.
\end{itemize}

We take $0<R<R_0$ such that $B(z,R)\subseteq U_{R_0}(a,b)$. In this way we have that the generic point of $\partial \Omega\cap B(z,R)$ will be $$x=(y,\psi(y)),$$ with $y\in\mathbb{R}^{n-1}$ and $|y|< R$. \\
Since $z$ is orthogonally accessible, we choose a point $$\alpha=(0,t)\in\mathbb{R}^{n-1}\times\mathbb{R} \ \mathrm{with} \ r<t<b.$$ In this way we have have that $\alpha\notin\overline{\Omega}$ and we easily notice that $$ (\alpha\downarrow z \ \mathrm{with} \ \alpha\notin\overline{\Omega}) \ \Leftrightarrow \ t\searrow r,$$ where with the notation $\alpha\downarrow z$ we mean that  
$\alpha$ converges orthogonally to $z$.

\medbreak

\textsc{\textbf{Step 2.}} In this step we construct the term $I_R(\alpha)$.  \\ 
The term of which we want to compute the limit in the definition of spherical index is the following
\begin{equation}\label{e: I_R(alpha) LP parte 1}
I_R(\alpha)= \frac{2}{n\omega_n r}\int_{\partial\Omega\cap B(z,R)} \frac{\langle\alpha-x,\alpha\rangle}{|x-\alpha|^n} \, d\sigma(x).
\end{equation}
We know that if $x\in\partial \Omega\cap B(z,R)$ then $x=(y,\psi(y))$ with $y\in\mathbb{R}^{n-1}$ and $|y|< R$. Since we have chosen the point $\alpha=(0,t)$, then we have that
\begin{equation}\label{e: differenza}
\alpha-x=(-y,t-\psi(y)) \ \mathrm{with} \ |y|< R.
\end{equation}
Thus, recalling that  $\partial\Omega\cap B(z,R)$ is the graph of a Lipschitz function and keeping in mind \eqref{e: differenza}, we have that \eqref{e: I_R(alpha) LP parte 1} becomes  
\begin{equation}\label{e: I_R(alpha) caso LP} 
I_R(\alpha)=\frac{2t}{n\omega_n r}\int_{\left\{|y|<R\right\}}\frac{t-\psi(y)}{\left(|y|^2+(t-\psi(y))^2\right)^{\frac{n}{2}}}N(y) \, dy
\end{equation} 
where $$N(y)=\sqrt{1+|\nabla\psi(y)|^2}.$$
Using the coarea formula the last integral of \eqref{e: I_R(alpha) caso LP} is equal to $$\int_0^{R}\left(\int_{\left\{|y|=\rho\right\}}\frac{t-\psi(y)}{\left(\rho^2+(t-\psi(y))^2\right)^{\frac{n}{2}}}N(y) \, d\sigma(y)\right)d\rho.$$
We consider the change of variables $y=\rho\eta$ with $\eta\in\mathbb{R}^{n-1}$, $|\eta|=1$. In this way we get 
$$\int_0^R\left(\int_{\left\{|\eta|=1\right\}}\rho^{n-2}\frac{t-\psi(\rho\eta)}{\left(\rho^2+(t-\psi(\rho\eta))^2\right)^{\frac{n}{2}}}N(\rho\eta) \, d\sigma(\eta)\right)d\rho.$$ 
Changing the order of integration and making the change of variables $\rho=(t-r)s$ we get
\begin{equation}\label{e: I_R(alpha) caso LP CV}
I_R(\alpha)=\frac{2t}{n\omega_n r}\int_{\left\{|\eta|=1\right\}}\biggl(\int_0^{\frac{R}{t-r}}\frac{s^{n-2}}{\left(s^2+\left(\frac{t-\psi}{t-r}\right)^2\right)^{\frac{n}{2}}}\frac{t-\psi}{t-r}N \, ds\biggr)d\sigma(\eta)
\end{equation}
where $$N:=N((t-r)s\eta) \quad \mathrm{and} \quad \psi:=\psi((t-r)s\eta).$$

\medbreak

\textsc{\textbf{Step 3.}} In this step we want to study the behavior of $I_R(\alpha)$ when $\alpha\downarrow z$ (i.e. $t\to r^{+}$). In particular, we want to analyze the behavior as $t\to r^{+}$ of $$\frac{t-\psi}{t-r}$$ where $\psi=\psi((t-r)s\eta)$. \\
In order to do this we define
\begin{equation}\label{e: definizione omega} 
\omega(y):=\frac{\psi(y)-\psi(0)}{|y|} \quad \mathrm{with} \ y\neq0.
\end{equation}
Since $\psi$ is differentiable and $\nabla\psi(0)=0$, then we get 
\begin{equation}\label{e: limite omega}
\omega(y)\to\langle\nabla\psi(0),y\rangle=0 \ \mathrm{as} \ y\to0.
\end{equation} 
Since $\psi$ is Lipschitz, we have that $$|\omega(y)|=\frac{|\psi(y)-\psi(0)|}{|y|}\leq L,$$ where $L$ denotes the Lipschitz constant of $\psi$. Hence, $\omega$ is bounded. \\
By the definition of the function $\omega$, it follows that $$\psi(y)-r=\psi(y)-\psi(0)=|y|\omega(y)$$ and so, recalling that $y=(t-r)s\eta$ with $|\eta|=1$, we get $$\frac{t-\psi(y)}{t-r}=1-\frac{|y|\omega(y)}{t-r}=1-s\omega((t-r)s\eta).$$
Thus, we can rewrite \eqref{e: I_R(alpha) caso LP CV} as 
\begin{equation}\label{e: I_R(alpha) omega} 
I_R(\alpha)=\frac{2t}{n\omega_n r}\int_{\left\{|\eta|=1\right\}}\biggl(\int_0^{\frac{R}{t-r}}\frac{s^{n-2}}{(s^2+(1-s\omega)^2)^{\frac{n}{2}}}(1-s\omega)N \, ds\biggr)d\sigma(\eta),
\end{equation}
where we use the notation $$\omega:=\omega((t-r)s\eta).$$
We define
\begin{equation}\label{e: I_R(alpha) omega 1}
I_R^{(1)}(\alpha)=\frac{2t}{n\omega_n r}\int_{\left\{|\eta|=1\right\}}\biggl(\int_0^{\frac{R}{t-r}}\frac{s^{n-2}}{(s^2+(1-s\omega)^2)^{\frac{n}{2}}}N \, ds\biggr)d\sigma(\eta)
\end{equation}
and
\begin{equation}\label{e: I_R(alpha) omega 2}
I_R^{(2)}(\alpha)=\frac{2t}{n\omega_n r}\int_{\left\{|\eta|=1\right\}}\biggl(\int_0^{\frac{R}{t-r}}\frac{s^{n-1}}{(s^2+(1-s\omega)^2)^{\frac{n}{2}}}\omega N \, ds\biggr)d\sigma(\eta).
\end{equation}
With these definitions at hands we rewrite \eqref{e: I_R(alpha) omega} as
\begin{equation}\label{e: splitting I_R(alpha)}
I_R(\alpha)=I_R^{(1)}(\alpha)-I_R^{(2)}(\alpha).
\end{equation}

\medbreak 

\textsc{\textbf{Step 4.}} In this step we want to study the behavior of $I_R^{(1)}(\alpha)$. In particular, we want to prove that 
\begin{equation}\label{e: limite I_R(alpha)^1}
\lim_{\pedfrac{\alpha\downarrow z}{\alpha\notin\overline{\Omega}}}I_R^{(1)}(\alpha)=1.
\end{equation} 
First of all, we recall that $N=N(s(t-r)\eta)=\sqrt{1+|\nabla\psi(s(t-r)\eta)|^2}$ and so we have 
\begin{equation}\label{e: gradiente limitato}
\sup_{s\in\left[0,\frac{R}{t-r}\right]}N(s(t-r)\eta)\leq\sqrt{1+||\nabla\psi||^2_{L^{\infty}(\widehat{B}_R(0))}}=:C<+\infty,
\end{equation}
since the gradient of a Lipschitz and differentiable function is bounded. \\
By using the fact that $\nabla\psi(t\xi)\to\nabla\psi(0)$ as $t\to0$ for a.e. $\xi\in\mathbb{R}^{n-1}$ and $\nabla\psi(0)=0$, we have that
\begin{equation}\label{e: limite N}
N=N(s(t-r)\eta)=1+o(1) \ \mathrm{as} \ t\to r^+
\end{equation}
for a.e. $\eta\in\mathbb{R}^{n-1}$, $|\eta|=1$ and $s\in\left[0,a\right]$ with $a>0$ fixed. \\
Now, in order to compute the limit of \eqref{e: limite I_R(alpha)^1}, we want to show that it is possible to take the limit inside the integral. \\
We fix $a>1$ such that $0<\frac{1}{aL}<\frac{R}{t-r}$. Then, since $\omega$ is bounded by $L$, we have that for all $s\in\left[0,\frac{1}{aL}\right]$ there holds $$|s\omega|\leq\frac{1}{a} \quad \forall s\in\left[0,\frac{1}{aL}\right].$$
Using the superadditivity of the function $x\mapsto x^{\frac{n}{2}}$ and the previous estimates, we have
$$ \frac{s^{n-2}}{\left(s^2+\left(1-s\omega\right)^2\right)^{\frac{n}{2}}} N\leq
\left\{
     \begin{array}{ll} 
		      \frac{s^{n-2}}{(1-s\omega)^n}N\leq C\left(\frac{a}{a-1}\right)^n s^{n-2} & \mbox{if } 
					s\in\left[0,  \frac{1}{aL} \right] \\
					\frac{s^{n-2}}{s^n}N\leq \frac{C}{s^2} & \mbox{if } 
					s\in\left(\frac{1}{aL},\frac{R}{t-r}\right)
		 \end{array}
.
\right. $$
Thanks to this estimate, we can compute the limit in \eqref{e: limite I_R(alpha)^1} using Lebesgue dominated convergence theorem. \\ We recall that as $t\to r^+$ we have 
\begin{itemize}
\item[(i)] $\omega\to0$ by \eqref{e: limite omega}; 
\item[(ii)] $N\to1$ a.e. by \eqref{e: limite N}.
\end{itemize} 
Therefore, we get
$$\lim_{\pedfrac{\alpha\downarrow z}{\alpha\notin\overline{\Omega}}} I_R^{(1)}(\alpha)=\frac{2}{n\omega_n}\sigma_{n-1}\int_0^{+\infty}\frac{s^{n-2}}{(1+s^2)^{\frac{n}{2}}} \, ds= 1$$ and this proves \eqref{e: limite I_R(alpha)^1}.

\medbreak

\textsc{\textbf{Step 5.}} In this step we study the behavior of $I_R^{(2)}(\alpha)$. \\
We let $$\omega=\omega^{+}-\omega^{-}=\max\left\{0,\omega\right\}-\max\left\{0,-\omega\right\}$$ and, recalling the definition of $I_R^{(2)}(\alpha)$ in \eqref{e: I_R(alpha) omega 2}, we define 
\begin{equation}\label{e: ialfa2+}
\left(I_R^{(2)}(\alpha)\right)^{+}=\frac{2t}{n\omega_n r}\int_{\left\{|\eta|=1\right\}}\biggl(\int_0^{\frac{R}{t-r}}\frac{s^{n-1}}{(s^2+(1-s\omega)^2)^{\frac{n}{2}}}\omega^{+} N \, ds\biggr)d\sigma(\eta)
\end{equation} and 
\begin{equation}\label{e: ialfa2-}
\left(I_R^{(2)}(\alpha)\right)^{-}=\frac{2t}{n\omega_n r}\int_{\left\{|\eta|=1\right\}}\biggl(\int_0^{\frac{R}{t-r}}\frac{s^{n-1}}{(s^2+(1-s\omega)^2)^{\frac{n}{2}}}\omega^{-} N \, ds\biggr)d\sigma(\eta)
\end{equation}
so that
\begin{equation}\label{e: split ialfa2}
I_R^{(2)}(\alpha)=\left(I_R^{(2)}(\alpha)\right)^{+}-\left(I_R^{(2)}(\alpha)\right)^{-}.
\end{equation}
By the definition of $\omega$ in \eqref{e: definizione omega}, it follows that 
$$ \left|y\right|\omega^{-}(y)=(\psi(y)-\psi(0))^{-}=
\left\{
     \begin{array}{ll} 
		      0 & \mbox{if } \psi(y)\geq\psi(0) \\
					\psi(0)-\psi(y) & \mbox{if } \psi(y)\leq\psi(0)
		 \end{array}
.
\right. $$
We consider the case in which $\psi(y)\leq\psi(0)$. We use the fact that the graph of $\psi$ is outside the ball and so $\psi(y)\geq\sqrt{r^2-|y|^2}$. In this way we get $$0\leq\psi(0)-\psi(y)\leq r-\sqrt{r^2-\left|y\right|^2}=\frac{|y|^2}{r+\sqrt{r^2-\left|y\right|^2}}\leq\frac{|y|^2}{r}.$$
Since this estimate clearly holds in the case in which $\psi(y)\geq\psi(0)$ we get 
\begin{equation}\label{e: maggiorazione omega-}
0\leq\omega^{-}(y)=\frac{(\psi(y)-\psi(0))^{-}}{|y|}\leq\frac{|y|}{r}.
\end{equation}
Recalling the definition of $\left(I_R^{(2)}(\alpha)\right)^{-}$ in \eqref{e: ialfa2-} and using \eqref{e: gradiente limitato} and \eqref{e: maggiorazione omega-} where $y=(t-r)s\eta$ with $|\eta|=1$, we have 
\begin{align*}
0&\leq\left(I_R^{(2)}(\alpha)\right)^{-} \nonumber \\
&\leq \frac{2t}{n\omega_n r} \int_{\left\{|\eta|=1\right\}}\biggl(\int_0^{\frac{R}{t-r}}\frac{s^{n-1}}{\left(s^2+(1-s\omega)^2\right)^{\frac{n}{2}}} \frac{|s(t-r)\eta|}{r}N \, ds\biggr)d\sigma(\eta) \\ \nonumber
&\leq \frac{2t}{n\omega_n r^2}\int_{\left\{|\eta|=1\right\}}\biggl(\int_0^{\frac{R}{t-r}}\frac{s^{n-1}}{s^n}(t-r)s\sqrt{1+\left\|\nabla\psi\right\|^2_{L^{\infty}(\widehat{B}_R(0))}} \, ds\biggr)d\sigma(\eta) \nonumber \\
&\leq \frac{2b}{n\omega_n r^2}\sigma_{n-1}\sqrt{1+\left\|\nabla\psi\right\|^2_{L^{\infty}(\widehat{B}_R(0))}}	\int_0^{\frac{R}{t-r}}(t-r) \, ds \nonumber \\
&\leq cR,
\end{align*}
where $c$ is a constant depending only on $n$,$r$,$b$ and $\left\|\nabla\psi\right\|_{L^{\infty}(\widehat{B}_R(0))}$. \\ Using this estimate, together with the fact that $\left(I_R^{(2)}(\alpha)\right)^{+}$ defined in \eqref{e: ialfa2+} is positive, in \eqref{e: split ialfa2} we end up with 
\begin{equation}\label{e: maggiorazione ialfa2}
I_R^{(2)}(\alpha)\geq\left(I_R^{(2)}(\alpha)\right)^{+}-cR\geq-cR.
\end{equation}

\medbreak

\textsc{\textbf{Step 6.}} In this step we get an estimate for the limit as $\alpha\downarrow z$ of $I_R(\alpha)$. \\
Recalling the splitting of $I_R(\alpha)$ in \eqref{e: splitting I_R(alpha)} and combining it with the estimate for $I_R^{(2)}(\alpha)$ in \eqref{e: maggiorazione ialfa2} we get $$I_R(\alpha)\leq I_R^{(1)}(\alpha)+cR.$$
We compute the limit recalling \eqref{e: limite I_R(alpha)^1} and we obtain that
$$\lim_{\pedfrac{\alpha\downarrow z}{\alpha\notin\overline{\Omega}}} I_R(\alpha)\leq 1+cR.$$
This estimates holds for all $R>0$ sufficiently small and so sending $R$ to $0$, we get
\begin{equation}\label{e: limite con alpha=(0,t)}
\liminf_{R\searrow0}\biggl(\lim_{\pedfrac{\alpha\downarrow z}{\alpha\notin\overline{\Omega}}} I_R(\alpha)\biggr)\leq1.
\end{equation}

\medbreak

\textsc{\textbf{Step 7.}} In this step we conclude the proof. \\
We have proved in \eqref{e: limite con alpha=(0,t)} that if we consider $\alpha=(0,t)$ (i.e. if we consider $\alpha$ that converges orthogonally to $z$), then the limit in the definition of spherical flatness index is less or equal than $1$. \\ Since in the definition of spherical index we are taking the inferior limit over all the possible $\alpha\notin\overline{\Omega}$ and we have already proved that when $\alpha=(0,t)\notin\overline{\Omega}$ the limit is less or equal than $1$, we conclude that $$S_z(\partial \Omega,0)\leq1.$$ 
\end{proof}

\section{Proof of the main results}
In this section we prove the main results of this paper. We start by proving the stability inequality of Theorem \ref{t: relazione KG e S_z}.

\begin{proof}[Proof of Theorem \ref{t: relazione KG e S_z}]
Since the Kuran gap and the spherical index are both invariant with respect to translations, without lost of generality we can give the proof in the case $x_0=0$. \\
Recalling the definitions of Kuran gap and of $\alpha$-Kuran function we immediately have
\begin{align*}
\mathcal{K}(\partial \Omega,0)&=\sup_{\alpha\notin\overline{\Omega}}\left|k_{\alpha}(0)-\fint_{\partial \Omega} k_{\alpha} \, d\sigma\right| \nonumber \\
&\geq \left|k_{\alpha}(0)-\fint_{\partial \Omega} k_{\alpha} \, d\sigma\right| \nonumber \\
&\stackrel{k_{\alpha}(0)=0}{\geq} \frac{1}{|\partial \Omega|} \int_{\partial \Omega}k_{\alpha} \, d\sigma \nonumber \\
&\stackrel{k_{\alpha}=1+h_{\alpha}}{=} 1+\frac{1}{|\partial\Omega|}\int_{\partial\Omega}h_{\alpha} \, d\sigma.
\end{align*}
So, letting $$I_{\alpha}:=\int_{\partial \Omega} h_{\alpha} \, d\sigma$$ we have that 
\begin{equation}\label{e: maggiorazione KG}
\mathcal{K}(\partial \Omega,0)\geq 1+\frac{1}{|\partial \Omega|} I_{\alpha} \quad \forall\alpha\notin\overline{\Omega}.
\end{equation}
In order to study the behavior of $I_{\alpha}$ as $\alpha\to z$, we take $R>0$ and we consider the following splitting
\begin{align*}
I_{\alpha}&=|\alpha|^{n-2}\int_{\partial \Omega}\frac{|x|^2-|\alpha|^2}{|x-\alpha|^n} \, d\sigma(x) \nonumber \\
&= |\alpha|^{n-2}\biggl(\int_{\partial \Omega \backslash B(z,R)}\frac{|x|^2-|\alpha|^2}{|x-\alpha|^n} \, d\sigma(x) + \int_{\partial \Omega \cap B(z,R)}\frac{|x|^2-|\alpha|^2}{|x-\alpha|^n} \, d\sigma(x)\biggr).
\end{align*}
We give the following definitions
\begin{equation}\label{e: def I_alpha 1}
I_{\alpha}^{(1)}:=|\alpha|^{n-2}\int_{\partial \Omega\backslash B(z,R)}\frac{|x|^2-|\alpha|^2}{|x-\alpha|^n} \, d\sigma(x)
\end{equation}
and
\begin{equation}\label{e: def I_alpha 2}
I_{\alpha}^{(2)}:=|\alpha|^{n-2}\int_{\partial \Omega\cap B(z,R)} \frac{|x|^2-|\alpha|^2}{|x-\alpha|^n} \, d\sigma(x).
\end{equation}
In this way we have that
\begin{equation}\label{e: splitting I_alpha}
I_{\alpha}=I_{\alpha}^{(1)}+I_{\alpha}^{(2)}.
\end{equation}
Since the function $(x,\alpha)\mapsto h_{\alpha}(x)$ is smooth in $(\mathbb{R}^n\times\mathbb{R}^n)\backslash\left\{x=\alpha\right\}$ we have $$\sup_{\partial \Omega\backslash B(z,R)}|h_{\alpha}|<C(R)$$ for a constant $C(R)$ independent from $\alpha$ if $\alpha$ is sufficiently close to $z$. Therefore, we can apply the dominated convergence theorem getting $$\lim_{\pedfrac{\alpha\to z}{\alpha\notin\overline{\Omega}}}\int_{\partial \Omega\backslash B(z,R)} h_{\alpha}(x) \, d\sigma(x)=\int_{\partial \Omega\backslash B(z,R)} \lim_{\pedfrac{\alpha\to z}{\alpha\notin\overline{\Omega}}} h_{\alpha}(x) \, d\sigma(x)=\int_{\partial \Omega\backslash B(z,R)} h_z(x) \, d\sigma(x).$$
We notice that if $x\in\partial \Omega\backslash B(z,R)$ then $x\notin B(0,r)$ and $x\neq z$. So we have that $|x|\geq|z|$ and consequently $$h_z(x)=|z|^{n-2}\frac{|x|^2-|z|^2}{|x-z|^n}\geq0.$$
Then, recalling \eqref{e: def I_alpha 1}, we get 
\begin{equation}\label{e: limite I_alpha 1}
\lim_{\pedfrac{\alpha\to z}{\alpha\notin\overline{\Omega}}} I_{\alpha}^{(1)}\geq0.
\end{equation}
Keeping in mind \eqref{e: splitting I_alpha} and \eqref{e: limite I_alpha 1}, from \eqref{e: maggiorazione KG} we get 
\begin{equation}\label{e: maggiorazione2 KG}
\mathcal{K}(\partial \Omega,0)\geq 1+\frac{1}{|\partial \Omega|}\limsup_{\pedfrac{\alpha\to z}{\alpha\notin\overline{\Omega}}} I_{\alpha}^{(2)}.
\end{equation}
We notice that $$|x|^2-|\alpha|^2=|x-\alpha+\alpha|^2-|\alpha|^2=|x-\alpha|^2+2\langle x-\alpha,\alpha\rangle$$ and using this fact in \eqref{e: def I_alpha 2}, we get
\begin{equation}\label{e: splitting I_alpha 2}
I_{\alpha}^{(2)}=|\alpha|^{n-2}\biggl(\int_{\partial \Omega\cap B(z,R)}\frac{1}{|x-\alpha|^{n-2}} \, d\sigma(x)+\int_{\partial \Omega \cap B(z,R)}2\frac{\langle x-\alpha, \alpha \rangle}{|x-\alpha|^n} \, d\sigma(x)\biggr)
\end{equation}
We notice that the first integral of \eqref{e: splitting I_alpha 2} is clearly not negative and so we get 
\begin{equation}
I_{\alpha}^{(2)}\geq -2|\alpha|^{n-2}\int_{\partial \Omega\cap B(z,R)} \frac{\langle \alpha-x,\alpha \rangle}{|x-\alpha|^n} \, d\sigma(x).
\end{equation}
Therefore, keeping in mind that when $\alpha\to z$ then $|\alpha|\to r$, we have 
\begin{equation}\label{e: limsup I_alpha 2}
\limsup_{\pedfrac{\alpha\to z}{\alpha\notin\overline{\Omega}}} I_{\alpha}^{(2)}\geq -2r^{n-2}\liminf_{\pedfrac{\alpha\to z}{\alpha\notin\overline{\Omega}}}\int_{\partial \Omega \cap B(z,R)} \frac{\langle \alpha-x,\alpha \rangle}{|x-\alpha|^n} \, d\sigma(x).
\end{equation}
Combining \eqref{e: maggiorazione2 KG} and \eqref{e: limsup I_alpha 2} we get $$\mathcal{K}(\partial \Omega,0)\geq 1-\frac{2r^{n-2}}{|\partial \Omega|}\liminf_{\pedfrac{\alpha\to z}{\alpha\notin\overline{\Omega}}}\int_{\partial \Omega\cap B(z,R)} \frac{\langle \alpha-x, \alpha \rangle}{|x-\alpha|^n} \, d\sigma(x).$$ 
Since this holds for every $R>0$ sufficiently small we get 
\begin{align*}
\mathcal{K}(\partial \Omega,0)&\geq\limsup_{R\searrow0}\biggl(1-\frac{2r^{n-2}}{|\partial \Omega|}\liminf_{\pedfrac{\alpha\to z}{\alpha\notin\overline{\Omega}}}\int_{\partial \Omega\cap B(z,R)}\frac{\langle \alpha-x, \alpha \rangle}{|x-\alpha|^n} \, d\sigma(x)\biggr) \nonumber \\
&=1-\frac{n\omega_n r^{n-1}}{|\partial \Omega|}\frac{2}{n\omega_n r}\liminf_{R\searrow0}\biggl(\liminf_{\pedfrac{\alpha\to z}{\alpha\notin\overline{\Omega}}}\int_{\partial \Omega\cap B(z,R)} \frac{\langle \alpha-x,\alpha\rangle}{|x-\alpha|^n} \, d\sigma(x)\biggr) \nonumber \\
&=1-\frac{n\omega_n r^{n-1}}{|\partial \Omega|}S_z(\partial \Omega,0) \nonumber \\
&= \frac{|\partial \Omega|-S_z(\partial \Omega,0)|\partial B|}{|\partial \Omega|}
\end{align*}
and this proves \eqref{e: relazione KG e S_z}.
\end{proof}

We are now ready to prove all the corollaries.

\begin{proof}[Proof of Corollary \ref{c: condizione necessaria pseudosfere}]
Since $\partial \Omega$ is a harmonic pseudosphere centered at $x_0$, we know that $$\mathcal{K}(\partial \Omega,x_0)=0.$$ Thus, from \eqref{e: relazione KG e S_z} we get $$S_z(\partial \Omega,0)\geq\frac{|\partial \Omega|}{|\partial B|}\geq 1 \quad \forall z\in T(\partial \Omega,x_0),$$ where in the last inequality we have used the isoperimetric inequality. 
\end{proof}

\begin{proof}[Proof of Corollary \ref{c: indice S e disuguaglianza}] It immediately follows from Theorem \ref{t: relazione KG e S_z}.
\end{proof}

\begin{proof}[Proof of Corollary \ref{c: condizione sufficiente pseudosfere}] 
Clearly, since we are in the same assumptions of Corollary \ref{c: indice S e disuguaglianza}, we have that \eqref{e: maggiorazione Kuran Gap} holds. \\
Since $B\subseteq \Omega$ by the isoperimetric inequality we have 
\begin{equation}\label{e: magg Isoperimetrica}
|\partial \Omega|-|\partial B|\geq n\omega_n^{\frac{1}{n}}\left(\left|\Omega\right|^{\frac{n-1}{n}}-\left|B\right|^{\frac{n-1}{n}}\right).
\end{equation} 
Applying Lagrange mean value to the function $t\mapsto t^{\frac{n-1}{n}}$  we get 
\begin{equation}\label{e: magg Lagrange}
\left|\Omega\right|^{\frac{n-1}{n}}-\left|B\right|^{\frac{n-1}{n}}\geq \frac{n-1}{n}\frac{|\Omega\backslash B|}{\left|\Omega\right|^{\frac{1}{n}}}.
\end{equation}
Therefore, combining \eqref{e: magg Isoperimetrica} and \eqref{e: magg Lagrange}, we obtain $$|\partial \Omega|-|\partial B|\geq(n-1)\omega_n^{\frac{1}{n}}\frac{|\Omega\backslash B|}{\left|\Omega\right|^{\frac{1}{n}}}.$$ 
This inequality, together with \eqref{e: maggiorazione Kuran Gap}, leads us to the conclude that
\begin{equation}\label{e: stabilità isoperimetrica}
\mathcal{K}(\partial \Omega, x_0)\geq\frac{(n-1)\omega_n^{\frac{1}{n}}}{\left|\Omega\right|^{\frac{1}{n}}}\frac{|\Omega\backslash B|}{|\partial \Omega|}.
\end{equation}
Now, since $\partial\Omega$ is a harmonic pseudosphere, we have that $\mathcal{K}(\partial \Omega,x_0)=0$. Hence, by \eqref{e: stabilità isoperimetrica} it follows that $$\left|\Omega\backslash B\right|=0.$$ Since $\Omega$ is open and $B$ is the biggest ball centered at $x_0$ and contained in $\Omega$, we conclude that $$\Omega=B.$$
\end{proof}

\begin{proof}[Proof of Corollary \ref{c: pseudosfere lipschitz piatte}]
It immediately follows, combining Proposition \ref{p: indice sferico nel caso lipschitz piatto} and Corollary \ref{c: condizione sufficiente pseudosfere}.
\end{proof}

\section*{Appendix}
In this Appendix we prove \eqref{e: integrale notevole}, because, even though we believe that this result has been already proved, we are not able to provide an explicit reference. 

Let $\Omega$ be the Euclidean ball centered at $x_0=(0,-1)\in\mathbb{R}^{n-1}\times\mathbb{R}$ with radius $1$. We want to compute $$I:=\int_{\partial\Omega} \frac{x_n}{|x|^n} \, d\sigma(x).$$ 
First of all we notice that in $\mathbb{R}^n\backslash\{0\}$ we have that 
\begin{equation}\label{e: calcolo divergenza}
\mathrm{div}\left(\frac{x}{|x|^n}\right)=\frac{1}{2-n}\mathrm{div}\left(\nabla |x|^{2-n}\right)=0.
\end{equation}
We define the function $$F(x):=\frac{x}{|x|^n}, \quad x\neq0.$$
We notice that if $\nu$ is the outer unit normal to $B$ at the points of $\partial B$, then it is given by $$\frac{\nabla\left(x_1^2+\ldots+x_{n-1}^2+(x_n+1)^2\right)}{\left|\nabla\left(x_1^2+\ldots+x_{n-1}^2+(x_n+1)^2\right)\right|}=x+e_n.$$
Thus, for every $x\in\mathbb{R}^n\backslash\{0\}$ we have that 
\begin{equation}\label{e: prodotto scalare}
\langle F,\nu\rangle =\langle \frac{x}{|x|^n},x+e_n\rangle=\frac{1}{|x|^{n-2}}+\frac{x_n}{|x|^n}.
\end{equation}
For $\varepsilon<<1$ we define $$\Omega_{\varepsilon}:=\Omega\backslash B(0,\varepsilon).$$ We also define $$\gamma_{\varepsilon}:=\partial\Omega\cap B(0,\varepsilon), \quad \Gamma_{\varepsilon}:=\partial B(0,\varepsilon)\cap\Omega.$$ In this way, we have that
\begin{equation}\label{e: frontiera omega_epsilon}
\partial\Omega_{\varepsilon}=\left(\partial\Omega\backslash\gamma_{\varepsilon}\right)\cup\Gamma_{\varepsilon}.
\end{equation}
Thus, by the divergence theorem we have that $$\int_{\Omega_{\varepsilon}} \mathrm{div}F \, dx = \int_{\partial\Omega\backslash\gamma_{\varepsilon}} \langle F,\nu \rangle \, d\sigma -\int_{\Gamma_{\varepsilon}} \langle F,\nu \rangle \, d\sigma.$$ 
By \eqref{e: calcolo divergenza} we get 
\begin{equation}\label{e: uguaglianza integrali di superficie}
\int_{\partial\Omega\backslash\gamma_{\varepsilon}} \langle F,\nu \rangle \, d\sigma = \int_{\Gamma_{\varepsilon}} \langle F,\nu \rangle \, d\sigma,
\end{equation}
where in the last integral $\nu$ denotes the external normal to $B(0,\varepsilon)$. On $\partial B(0,\varepsilon)$ we have that $$\nu(x)=\frac{x}{|x|}=\frac{x}{\varepsilon}.$$ 
Thus, we have that 
\begin{align*}
\int_{\Gamma_{\varepsilon}} \langle F,\nu \rangle \, d\sigma &= \int_{\Gamma_{\varepsilon}} \langle \frac{x}{|x|^n},\frac{x}{|x|} \rangle \, d\sigma(x) \nonumber \\
&= \left(\frac{1}{\varepsilon}\right)^{n-1}|\Gamma_{\varepsilon}| \nonumber \\
&= \left(\frac{1}{\varepsilon}\right)^{n-1}\frac{1}{2}|B(0,\varepsilon)|(1+o(\varepsilon)) \nonumber \\
&= \frac{n\omega_n}{2}(1+o(\varepsilon)).
\end{align*}
Thus, we have proved that
\begin{equation}\label{e: primo integrale di superficie}
\int_{\Gamma_{\varepsilon}} \langle F,\nu \rangle \, d\sigma = \frac{n\omega_n}{2}(1+o(\varepsilon)).
\end{equation}
Now, we study $\displaystyle \int_{\partial\Omega\backslash\gamma_{\varepsilon}} \langle F,\nu \rangle \, d\sigma$. \\
By \eqref{e: prodotto scalare} it follows that 
\begin{align*}
\int_{\partial\Omega\backslash\gamma_{\varepsilon}} \langle F,\nu \rangle \, d\sigma &= \int_{\partial\Omega\backslash\gamma_{\varepsilon}} \left(\frac{1}{|x|^{n-2}}+\frac{x_n}{|x|^n}\right) d\sigma(x) \nonumber \\
&= \int_{\partial\Omega\backslash\gamma_{\varepsilon}} \frac{1}{|x|^{n-2}} \, d\sigma(x) + \int_{\partial\Omega\backslash\gamma_{\varepsilon}} \frac{x_n}{|x|^n} \, d\sigma(x).
\end{align*}
We notice that the function $x\mapsto\frac{1}{|x|^{n-2}}$ is summable on $\partial \Omega$ and $-\frac{x_n}{|x|^n}\chi_{\partial \Omega\backslash\gamma_{\varepsilon}}$ is a monotone increasing function with respect to $\varepsilon$. Thus, we can apply Lebesgue and Beppo Levi theorems to take the limit inside the two integrals. Thus, we have
$$\lim_{\varepsilon\to0}\int_{\partial\Omega\backslash\gamma_{\varepsilon}} \langle F,\nu \rangle \, d\sigma = \int_{\partial\Omega} \frac{1}{|x|^{n-2}} \, d\sigma(x) + \int_{\partial\Omega} \frac{x_n}{|x|^n} \, d\sigma(x).$$
Since $\partial\Omega$ is a unit Euclidean sphere and the function $x\mapsto|x|^{2-n}$ is harmonic and it is equal to $1$ at the point $(0,-1)\in\mathbb{R}^{n-1}\times\mathbb{R}$, we can apply Gauss mean value theorem getting $$\int_{\partial\Omega} \frac{1}{|x|^{n-2}} \, d\sigma(x) = n\omega_n.$$
Hence, we have proved that
\begin{equation}\label{e: secondo integrale di superficie}
\lim_{\varepsilon\to0}\int_{\partial\Omega\backslash\gamma_{\varepsilon}} \langle F,\nu \rangle \, d\sigma = n\omega_n+I.
\end{equation}
Thus, by \eqref{e: uguaglianza integrali di superficie} we have that $$\lim_{\varepsilon\to0}\int_{\partial\Omega\backslash\gamma_{\varepsilon}} \langle F,\nu \rangle \, d\sigma =  \lim_{\varepsilon\to0}\int_{\Gamma_{\varepsilon}} \langle F,\nu \rangle \, d\sigma$$ and combining \eqref{e: primo integrale di superficie} and \eqref{e: secondo integrale di superficie} we have that $$n\omega_n+I=\frac{n\omega_n}{2}$$ and so we conclude that $$I=-\frac{n\omega_n}{2}.$$  
\bigskip

\bigskip

\textbf{Acknowledgement} \ G. Cupini is   member of the \textit{Gruppo
	Nazionale per l'Analisi Ma\-te\-ma\-ti\-ca, la Probabilità e le loro Applicazioni
	(GNAMPA)} of the \textit{Istituto Nazionale di Alta Ma\-te\-ma\-ti\-ca (INdAM)} and he  acknow\-ledges financial support through the INdAM-GNAMPA Project CUP
E53C25002010001 - \emph{Esistenza e regolarit\`a per soluzioni di equazioni
	ellittiche e paraboliche anisotrope}.

\end{document}